\renewcommand{\mathbb}{\mathbf}
\newtheorem*{thm-plain}{Theorem}
\newtheorem{thm}{Theorem}[section]
\newaliascnt{lem}{thm}
\newtheorem{lem}[lem]{Lemma}
\newaliascnt{cor}{thm}
\newtheorem{cor}[cor]{Corollary}
\newaliascnt{prp}{thm}
\newaliascnt{cnj}{thm}
\newaliascnt{que}{thm}
\newaliascnt{fct}{thm}
\theoremstyle{definition}
\newaliascnt{dfn}{thm}
\newaliascnt{ntn}{thm}
\newaliascnt{rem}{thm}
\newtheorem{rem}[rem]{Remark}
\newaliascnt{nte}{thm}
\newaliascnt{exl}{thm}
\DeclareMathOperator{\Cov}{Cov}
\newcommand{\Measc}{\mathrm{Meas}_\mathrm{c}} 
\DeclareMathOperator{\Var}{Var}
\newcommand{\E}[2]{\operatorname{E}_{#1}[#2]}
\newcommand{\nicedot}{\raisebox{0.35ex}{\tikz \fill (0,0) circle (1pt);}} 
\newcommand{\annrel}[2]{\stackrel{\makebox[0pt]{\scriptsize #1}}{#2}} 
\numberwithin{equation}{section}
\begin{document}

  \title{Expected Degree of Weights in Demazure Modules of \(\widehat{\mathfrak{sl}}_2\)}

  \author[Thomas Bliem]{Thomas Bliem$^*$}
  \address{Thomas Bliem, Department of Mathematics\\San Francisco State University\\1600 Holloway Ave\\San Francisco CA 94109\\United States}
  \email{\href{mailto:bliem@math.sfsu.edu}{bliem@math.sfsu.edu}}
  \thanks{$^*$Supported by the Deutsche Forschungsgemeinschaft, SPP 1388.}
    
  \author[Stavros Kousidis]{Stavros Kousidis$^\dagger$}
  \address{Stavros Kousidis, Mathematisches Institut, Universit\"at zu K\"oln, Weyertal 86-90, 50931 K\"oln, Germany}
\email{\href{mailto:skousidi@math.uni-koeln.de}{skousidi@math.uni-koeln.de}}
  \thanks{$^\dagger$Supported by the Deutscher Akademischer Austauschdienst and the Deutsche Forschungsgemeinschaft, SFB/TR 12.}

  \begin{abstract}
	  We compute the expected degree of a randomly chosen element in a basis of weight vectors in the Demazure module $V_w(\Lambda)$ of \(\widehat{\mathfrak{sl}}_2\).
	  We obtain \emph{en passant} a new proof of Sanderson's dimension formula for these Demazure modules.
  \end{abstract}
  
  \maketitle

  \tableofcontents

  \section{Introduction}
		
	The traditional way to study the dimensions of weight spaces of a given representation is by means of their generating function, the character.
	While the character comprehends all weight multiplicities, it may be difficult to extract meaningful information from it.
	For example, the characters of irreducible representations of semisimple Lie algebras are explicitly given by Weyl's character formula.
	But to estimate weight multiplicities in representations with a large highest weight, more meaningful information can possibly be obtained from the fact that for $N \to \infty$ the weight distribution of $V(N\lambda)$ converges weakly to an absolutely continuous measure with piecewise polynomial density.
	This fact is commonly not deduced from Weyl's character formula but by symplectic geometry \cite{MR674406}.
	Similarly, given the character of a representation, one can immediately write down the character of its tensor powers $T^N(V)$, namely $\mathrm{ch}_{T^N(V)} = (\mathrm{ch}_V)^N$.
	To extract meaningful information, one could interpret this probabilistically as a convolution product of measures, saying that weights in tensor powers are distributed like sums of independent random variables, identically distributed according to the weight distribution of $V$.
	Then, by the central limit theorem and careful analysis, one can derive statistical information and estimates of weight multiplicities in high tensor powers \cite{MR2102573}.

	In this article, we take a probabilistic point of view on weight multiplicities in Demazure modules of the affine Lie algebra $\mathfrak{g}$ of type $A_1^{(1)}$.
	As $\mathfrak{g}$ is of rank $2$, its weight distributions are discrete measures on the plane.
	Some examples are shown in \autoref{10-bilder}.
	If we think of $\mathfrak{g}$ as $\widehat{\mathfrak{sl}}_2$, the extended loop algebra of $\mathfrak{sl}_2$, natural coordinates on this plane are the the eigenvalue for $\left( \begin{smallmatrix} 1 & 0 \\ 0 & -1 \end{smallmatrix} \right) \in \mathfrak{sl}_2$, the \emph{finite weight}, and the eigenvalue for a scaling element, the \emph{degree}.
	This suggests to start the analysis with the description of the marginal distributions corresponding to the finite weight and the degree.

	The distribution of the finite weight is explicitly known.
	Namely, the finite weight is distributed like a sum of independent random variables, all but one distributed identically \cite{MR1407880}.
	This fact has received much attention subsequently and has been generalized considerably \cite{MR1620507,MR2235341}.
	Strangely, the other marginal distribution, the distribution of the degree, seems to have escaped attention so far.
	Some examples are shown in \autoref{degree-distribution-Lambda0} and \ref{degree-distribution-10Lambda0}.
	Note that, even though the pictures suggest that the central limit theorem holds, the degree is not distributed like the sum of independent random variables.
	The lack of attention is especially astonishing as in the case of irreducible highest weight representations the degree distribution yields Macdonald's identities for Dedekind's $\eta$-function \cite[\S 12.2]{MR1104219}.
	The purpose of this article is to determine the expected value of the degree distribution in the case of a Demazure module of $\widehat{\mathfrak{sl}}_2$ (\autoref{main-theorem-expected-degree}).
	Even though we prefer a probabilistic language, our result can equivalently be stated as follows:
	Let $\mathrm{ch}_V$ be the character of a Demazure module for $\widehat{\mathfrak{sl}}_2$.
	We compute the Taylor expansion at $0$ of the basic specialization of $\mathrm{ch}_V$ up to order $1$.

	We calculate the expected degree by induction on the number of Demazure operators in Demazure's character formula.
	The natural coordinates to do this are not the finite weight and the degree, but coordinates dual to the simple roots of $\mathfrak{g}$.
	The actual induction follows a snake-like pattern (\autoref{snake-recursion-0} and \ref{snake-recursion-1}).
	Our strategy dictates that we must express the expected degree of a weight in a given Demazure module in terms of statistical information about the weight distribution of the previous Demazure module.
	It turns out that this involves not only the expected value, but also a second moment (\autoref{recursion-expected-value}).
	For this reason, we cannot apply an induction argument at this point.
	By what appears to be a coincidence to us, the necessary second moment can be expressed purely in terms of the variance of the finite weight.
	This variance is known by \cite{MR1407880}, thereby yielding a recurrence relation and consequently an explicit formula.
	
	\begin{figure}
	\begin{tabular}{ccccc}
	\vtop{\null{\hbox{\includegraphics{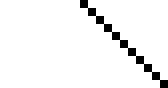}}}} &
	\vtop{\null{\hbox{\includegraphics{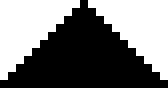}}}} &
	\vtop{\null{\hbox{\includegraphics{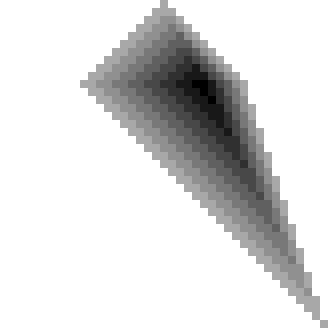}}}} &
	\vtop{\null{\hbox{\includegraphics{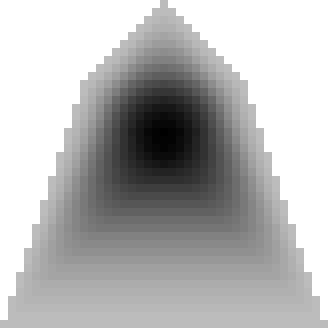}}}} &
	\vtop{\null{\hbox{\includegraphics{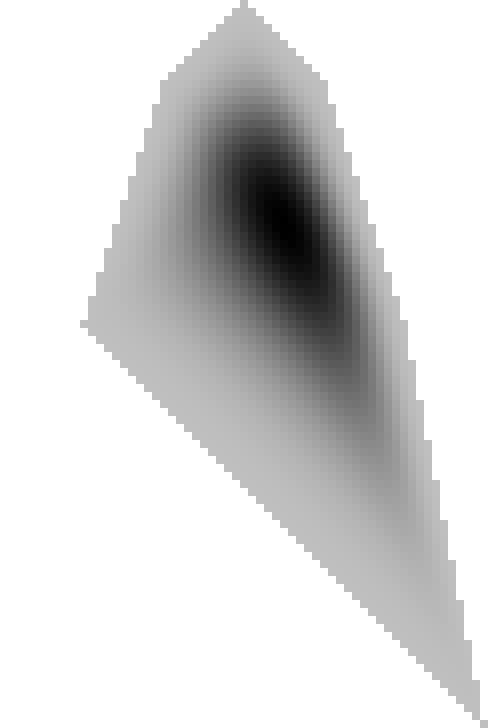}}}}
	\end{tabular}
	\begin{tabular}{ccc}
	\vtop{\null{\hbox{\includegraphics{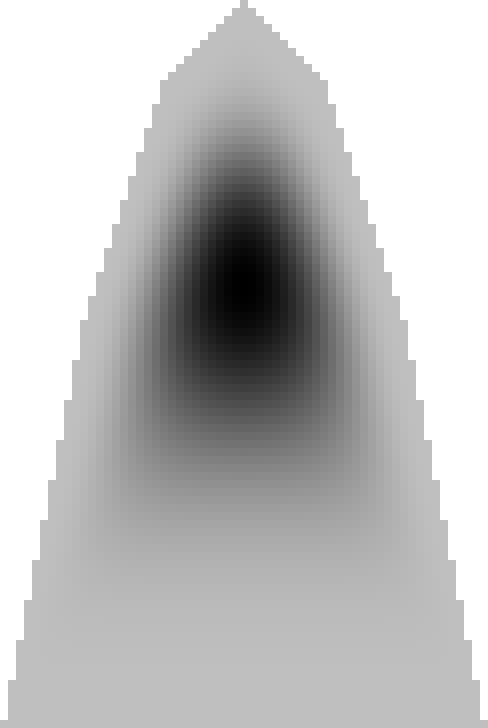}}}} &
	\vtop{\null{\hbox{\includegraphics{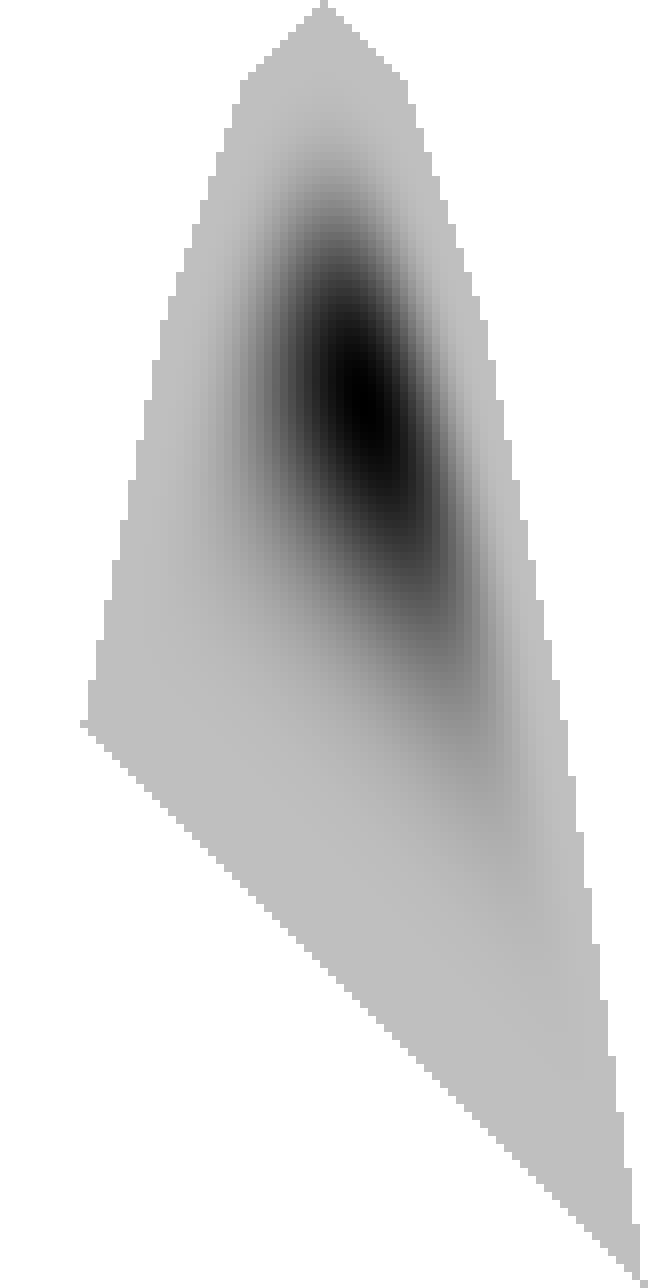}}}} &
	\vtop{\null{\hbox{\includegraphics{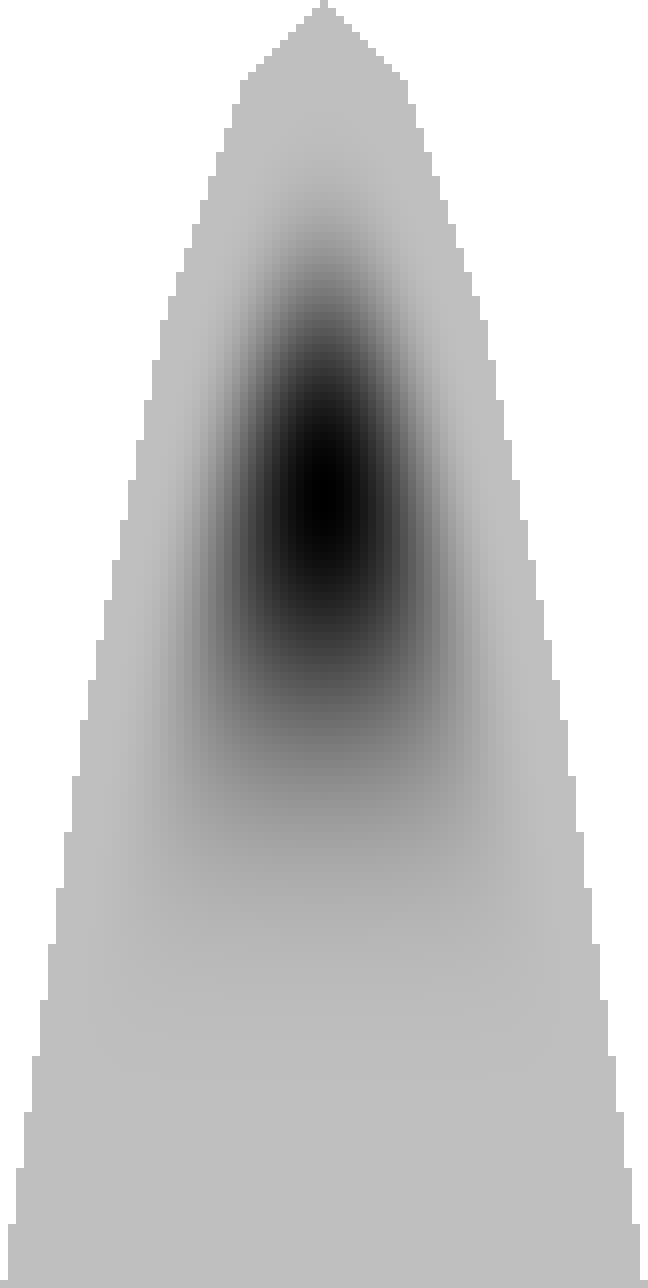}}}}
	\end{tabular}
	\caption{Weight distribution of $V_{w_{N,0}}(10\Lambda_0)$ for $N = 1, \ldots, 8$.
	The horizontal axis corresponds to the finite weight, the vertical axis to the degree.
	Light gray corresponds to the weight multiplicity $1$, black to the maximal occurring weight multiplicity in a given Demazure module.}
	\label{10-bilder}
	\end{figure}


	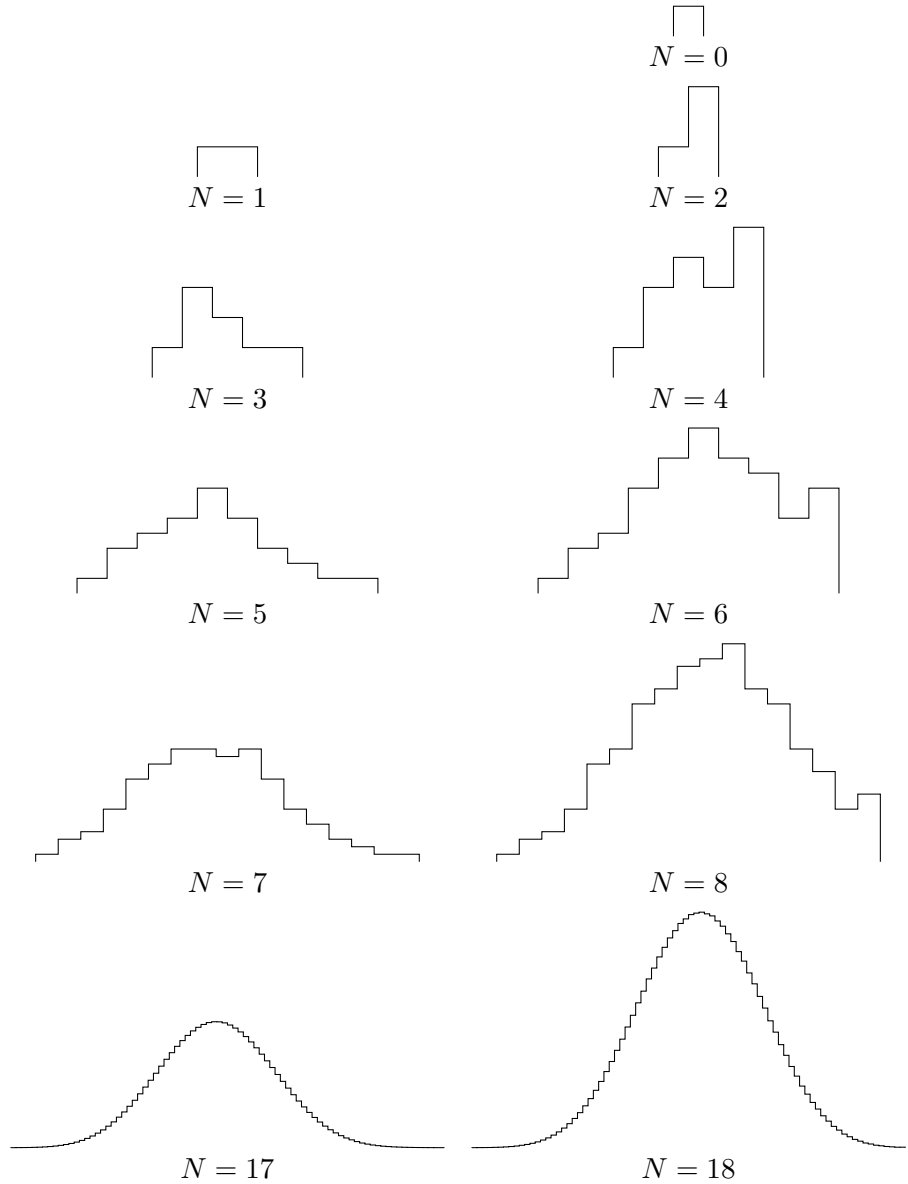
\begin{figure}
	\begin{tabular}{cc}
	&
	\begin{tikzpicture}[x=0.4cm,y=0.4cm]
		\draw (0,0) -- (0,1) -- (1,1) -- (1,0);
		\draw (.5,0) node[below]{$N=0$};
	\end{tikzpicture}
	\\
	\begin{tikzpicture}[x=0.4cm,y=0.4cm]
		\draw (0,0) -- (0,1) -- (1,1) -- (1,1) -- (2,1) -- (2,0);
		\draw (1,0) node[below]{$N=1$};
	\end{tikzpicture}
	&
	\begin{tikzpicture}[x=0.4cm,y=0.4cm]
		\draw (0,0) -- (0,1) -- (1,1) -- (1,3) -- (2,3) -- (2,0);
		\draw (1,0) node[below]{$N=2$};
	\end{tikzpicture}
	\\
	\begin{tikzpicture}[x=0.4cm,y=0.4cm]
		\draw (0,0) -- (0,1) -- (1,1) -- (1,3) -- (2,3) -- (2,2) -- (3,2) -- (3,1) -- (4,1) -- (4,1) -- (5,1) -- (5,0);
		\draw (2.5,0) node[below]{$N=3$};
	\end{tikzpicture}
	&
	\begin{tikzpicture}[x=0.4cm,y=0.4cm]
		\draw (0,0) -- (0,1) -- (1,1) -- (1,3) -- (2,3) -- (2,4) -- (3,4) -- (3,3) -- (4,3) -- (4,5) -- (5,5) -- (5,0);
		\draw (2.5,0) node[below]{$N=4$};
	\end{tikzpicture}
	\\
	\begin{tikzpicture}[x=0.4cm,y=0.2cm]
		\draw (0,0) -- (0,1) -- (1,1) -- (1,3) -- (2,3) -- (2,4) -- (3,4) -- (3,5) -- (4,5) -- (4,7) -- (5,7) -- (5,5) -- (6,5) -- (6,3) -- (7,3) -- (7,2) -- (8,2) -- (8,1) -- (9,1) -- (9,1) -- (10,1) -- (10,0);
		\draw (5,0) node[below]{$N=5$};
	\end{tikzpicture}
	&
	\begin{tikzpicture}[x=0.4cm,y=0.2cm]
		\draw (0,0) -- (0,1) -- (1,1) -- (1,3) -- (2,3) -- (2,4) -- (3,4) -- (3,7) -- (4,7) -- (4,9) -- (5,9) -- (5,11) -- (6,11) -- (6,9) -- (7,9) -- (7,8) -- (8,8) -- (8,5) -- (9,5) -- (9,7) -- (10,7) -- (10,0);
		\draw (5,0) node[below]{$N=6$};
	\end{tikzpicture}
	\\
	\begin{tikzpicture}[x=0.3cm,y=0.1cm]
		\draw (0,0) -- (0,1) -- (1,1) -- (1,3) -- (2,3) -- (2,4) -- (3,4) -- (3,7) -- (4,7) -- (4,11) -- (5,11) -- (5,13) -- (6,13) -- (6,15) -- (7,15) -- (7,15) -- (8,15) -- (8,14) -- (9,14) -- (9,15) -- (10,15) -- (10,11) -- (11,11) -- (11,7) -- (12,7) -- (12,5) -- (13,5) -- (13,3) -- (14,3) -- (14,2) -- (15,2) -- (15,1) -- (16,1) -- (16,1) -- (17,1) -- (17,0);
		\draw (8.5,0) node[below]{$N=7$};
	\end{tikzpicture}
	&
	\begin{tikzpicture}[x=0.3cm,y=0.1cm]
\draw (0,0) -- (0,1) -- (1,1) -- (1,3) -- (2,3) -- (2,4) -- (3,4) -- (3,7) -- (4,7) -- (4,13) -- (5,13) -- (5,15) -- (6,15) -- (6,21) -- (7,21) -- (7,23) -- (8,23) -- (8,26) -- (9,26) -- (9,27) -- (10,27) -- (10,29) -- (11,29) -- (11,23) -- (12,23) -- (12,21) -- (13,21) -- (13,15) -- (14,15) -- (14,12) -- (15,12) -- (15,7) -- (16,7) -- (16,9) -- (17,9) -- (17,0);
		\draw (8.5,0) node[below]{$N=8$};
	\end{tikzpicture}
	\\
	\begin{tikzpicture}[x=2pt,y=.01pt]
	\draw (0,0) -- (0,1) -- (1,1) -- (1,3) -- (2,3) -- (2,4) -- (3,4) -- (3,7) -- (4,7) -- (4,13) -- (5,13) -- (5,19) -- (6,19) -- (6,29) -- (7,29) -- (7,43) -- (8,43) -- (8,62) -- (9,62) -- (9,88) -- (10,88) -- (10,120) -- (11,120) -- (11,160) -- (12,160) -- (12,211) -- (13,211) -- (13,275) -- (14,275) -- (14,351) -- (15,351) -- (15,442) -- (16,442) -- (16,551) -- (17,551) -- (17,673) -- (18,673) -- (18,815) -- (19,815) -- (19,977) -- (20,977) -- (20,1156) -- (21,1156) -- (21,1353) -- (22,1353) -- (22,1568) -- (23,1568) -- (23,1795) -- (24,1795) -- (24,2040) -- (25,2040) -- (25,2295) -- (26,2295) -- (26,2554) -- (27,2554) -- (27,2819) -- (28,2819) -- (28,3088) -- (29,3088) -- (29,3348) -- (30,3348) -- (30,3598) -- (31,3598) -- (31,3838) -- (32,3838) -- (32,4056) -- (33,4056) -- (33,4252) -- (34,4252) -- (34,4423) -- (35,4423) -- (35,4559) -- (36,4559) -- (36,4667) -- (37,4667) -- (37,4738) -- (38,4738) -- (38,4770) -- (39,4770) -- (39,4766) -- (40,4766) -- (40,4727) -- (41,4727) -- (41,4645) -- (42,4645) -- (42,4530) -- (43,4530) -- (43,4384) -- (44,4384) -- (44,4208) -- (45,4208) -- (45,4005) -- (46,4005) -- (46,3783) -- (47,3783) -- (47,3541) -- (48,3541) -- (48,3290) -- (49,3290) -- (49,3036) -- (50,3036) -- (50,2772) -- (51,2772) -- (51,2512) -- (52,2512) -- (52,2256) -- (53,2256) -- (53,2006) -- (54,2006) -- (54,1767) -- (55,1767) -- (55,1544) -- (56,1544) -- (56,1335) -- (57,1335) -- (57,1144) -- (58,1144) -- (58,971) -- (59,971) -- (59,816) -- (60,816) -- (60,679) -- (61,679) -- (61,560) -- (62,560) -- (62,455) -- (63,455) -- (63,367) -- (64,367) -- (64,297) -- (65,297) -- (65,231) -- (66,231) -- (66,176) -- (67,176) -- (67,135) -- (68,135) -- (68,101) -- (69,101) -- (69,77) -- (70,77) -- (70,56) -- (71,56) -- (71,42) -- (72,42) -- (72,30) -- (73,30) -- (73,22) -- (74,22) -- (74,15) -- (75,15) -- (75,11) -- (76,11) -- (76,7) -- (77,7) -- (77,5) -- (78,5) -- (78,3) -- (79,3) -- (79,2) -- (80,2) -- (80,1) -- (81,1) -- (81,1) -- (82,1) -- (82,0);
		\draw (41,0) node[below]{$N=17$};
	\end{tikzpicture}
	&
	\begin{tikzpicture}[x=2pt,y=.01pt]
		\draw (0,0) -- (0,1) -- (1,1) -- (1,3) -- (2,3) -- (2,4) -- (3,4) -- (3,7) -- (4,7) -- (4,13) -- (5,13) -- (5,19) -- (6,19) -- (6,29) -- (7,29) -- (7,43) -- (8,43) -- (8,62) -- (9,62) -- (9,90) -- (10,90) -- (10,122) -- (11,122) -- (11,166) -- (12,166) -- (12,219) -- (13,219) -- (13,289) -- (14,289) -- (14,371) -- (15,371) -- (15,476) -- (16,476) -- (16,597) -- (17,597) -- (17,743) -- (18,743) -- (18,909) -- (19,909) -- (19,1107) -- (20,1107) -- (20,1326) -- (21,1326) -- (21,1585) -- (22,1585) -- (22,1858) -- (23,1858) -- (23,2173) -- (24,2173) -- (24,2508) -- (25,2508) -- (25,2883) -- (26,2883) -- (26,3264) -- (27,3264) -- (27,3687) -- (28,3687) -- (28,4110) -- (29,4110) -- (29,4570) -- (30,4570) -- (30,5014) -- (31,5014) -- (31,5486) -- (32,5486) -- (32,5928) -- (33,5928) -- (33,6392) -- (34,6392) -- (34,6803) -- (35,6803) -- (35,7225) -- (36,7225) -- (36,7589) -- (37,7589) -- (37,7944) -- (38,7944) -- (38,8220) -- (39,8220) -- (39,8492) -- (40,8492) -- (40,8665) -- (41,8665) -- (41,8827) -- (42,8827) -- (42,8884) -- (43,8884) -- (43,8920) -- (44,8920) -- (44,8856) -- (45,8856) -- (45,8779) -- (46,8779) -- (46,8587) -- (47,8587) -- (47,8389) -- (48,8389) -- (48,8100) -- (49,8100) -- (49,7808) -- (50,7808) -- (50,7426) -- (51,7426) -- (51,7066) -- (52,7066) -- (52,6618) -- (53,6618) -- (53,6206) -- (54,6206) -- (54,5729) -- (55,5729) -- (55,5288) -- (56,5288) -- (56,4805) -- (57,4805) -- (57,4376) -- (58,4376) -- (58,3903) -- (59,3903) -- (59,3498) -- (60,3498) -- (60,3075) -- (61,3075) -- (61,2710) -- (62,2710) -- (62,2333) -- (63,2333) -- (63,2027) -- (64,2027) -- (64,1713) -- (65,1713) -- (65,1463) -- (66,1463) -- (66,1208) -- (67,1208) -- (67,1005) -- (68,1005) -- (68,807) -- (69,807) -- (69,665) -- (70,665) -- (70,514) -- (71,514) -- (71,414) -- (72,414) -- (72,314) -- (73,314) -- (73,246) -- (74,246) -- (74,177) -- (75,177) -- (75,139) -- (76,139) -- (76,93) -- (77,93) -- (77,71) -- (78,71) -- (78,45) -- (79,45) -- (79,32) -- (80,32) -- (80,17) -- (81,17) -- (81,19) -- (82,19) -- (82,0);
		\draw (41,0) node[below]{$N=18$};
	\end{tikzpicture}
	\end{tabular}
	\caption{Degree distribution of $V_{w_{N,0}}(\Lambda_0)$ for $N = 0,1, \ldots, 8; 17, 18$.
	Degree $0$, the degree of the highest weight, is displayed on the left, the maximal occurring degree in a given Demazure module on the right.}
	\label{degree-distribution-Lambda0}
	\end{figure}
	
	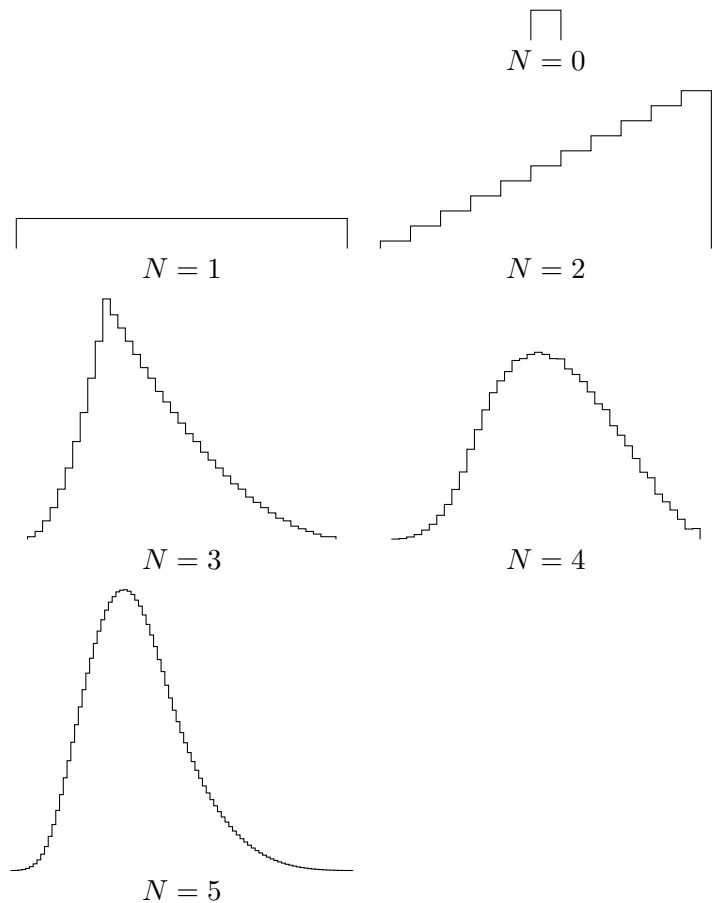
\begin{figure}
		\begin{tabular}{cc}
		&
		\begin{tikzpicture}[x=0.4cm,y=0.4cm]
			\draw (0,0) -- (0,1) -- (1,1) -- (1,0);
			\draw (0.5,0) node[below]{$N=0$};
		\end{tikzpicture}
		\\
		\begin{tikzpicture}[x=0.4cm,y=0.4cm]
			\draw (0,0) -- (0,1) -- (1,1) -- (1,1) -- (2,1) -- (2,1) -- (3,1) -- (3,1) -- (4,1) -- (4,1) -- (5,1) -- (5,1) -- (6,1) -- (6,1) -- (7,1) -- (7,1) -- (8,1) -- (8,1) -- (9,1) -- (9,1) -- (10,1) -- (10,1) -- (11,1) -- (11,0);
			\draw (5.5,0) node[below]{$N=1$};
		\end{tikzpicture}
		&
		\begin{tikzpicture}[x=0.4cm,y=.1cm]
			\draw (0,0) -- (0,1) -- (1,1) -- (1,3) -- (2,3) -- (2,5) -- (3,5) -- (3,7) -- (4,7) -- (4,9) -- (5,9) -- (5,11) -- (6,11) -- (6,13) -- (7,13) -- (7,15) -- (8,15) -- (8,17) -- (9,17) -- (9,19) -- (10,19) -- (10,21) -- (11,21) -- (11,0);
			\draw (5.5,0) node[below]{$N=2$};
		\end{tikzpicture}
		\\
		\begin{tikzpicture}[x=0.1cm,y=1pt]
			\draw (0,0) -- (0,1) -- (1,1) -- (1,3) -- (2,3) -- (2,7) -- (3,7) -- (3,12) -- (4,12) -- (4,19) -- (5,19) -- (5,27) -- (6,27) -- (6,37) -- (7,37) -- (7,48) -- (8,48) -- (8,61) -- (9,61) -- (9,75) -- (10,75) -- (10,91) -- (11,91) -- (11,85) -- (12,85) -- (12,80) -- (13,80) -- (13,75) -- (14,75) -- (14,70) -- (15,70) -- (15,65) -- (16,65) -- (16,61) -- (17,61) -- (17,56) -- (18,56) -- (18,52) -- (19,52) -- (19,48) -- (20,48) -- (20,44) -- (21,44) -- (21,40) -- (22,40) -- (22,37) -- (23,37) -- (23,33) -- (24,33) -- (24,30) -- (25,30) -- (25,27) -- (26,27) -- (26,24) -- (27,24) -- (27,21) -- (28,21) -- (28,19) -- (29,19) -- (29,16) -- (30,16) -- (30,14) -- (31,14) -- (31,12) -- (32,12) -- (32,10) -- (33,10) -- (33,8) -- (34,8) -- (34,7) -- (35,7) -- (35,5) -- (36,5) -- (36,4) -- (37,4) -- (37,3) -- (38,3) -- (38,2) -- (39,2) -- (39,1) -- (40,1) -- (40,1) -- (41,1) -- (41,0);
			\draw (20.5,0) node[below]{$N=3$};
		\end{tikzpicture}
		&
		\begin{tikzpicture}[x=0.1cm,y=.1pt]
			\draw (0,0) -- (0,1) -- (1,1) -- (1,3) -- (2,3) -- (2,9) -- (3,9) -- (3,18) -- (4,18) -- (4,35) -- (5,35) -- (5,57) -- (6,57) -- (6,91) -- (7,91) -- (7,132) -- (8,132) -- (8,189) -- (9,189) -- (9,255) -- (10,255) -- (10,341) -- (11,341) -- (11,415) -- (12,415) -- (12,490) -- (13,490) -- (13,555) -- (14,555) -- (14,600) -- (15,600) -- (15,635) -- (16,635) -- (16,677) -- (17,677) -- (17,684) -- (18,684) -- (18,700) -- (19,700) -- (19,708) -- (20,708) -- (20,700) -- (21,700) -- (21,684) -- (22,684) -- (22,683) -- (23,683) -- (23,645) -- (24,645) -- (24,624) -- (25,624) -- (25,597) -- (26,597) -- (26,558) -- (27,558) -- (27,513) -- (28,513) -- (28,491) -- (29,491) -- (29,430) -- (30,430) -- (30,394) -- (31,394) -- (31,354) -- (32,354) -- (32,306) -- (33,306) -- (33,254) -- (34,254) -- (34,233) -- (35,233) -- (35,171) -- (36,171) -- (36,142) -- (37,142) -- (37,111) -- (38,111) -- (38,76) -- (39,76) -- (39,39) -- (40,39) -- (40,41) -- (41,41) -- (41,0);
			\draw (20.5,0) node[below]{$N=4$};
		\end{tikzpicture}
		\\
		\begin{tikzpicture}[x=0.05cm,y=.02pt]
			\draw (0,0) -- (0,1) -- (1,1) -- (1,3) -- (2,3) -- (2,9) -- (3,9) -- (3,20) -- (4,20) -- (4,41) -- (5,41) -- (5,74) -- (6,74) -- (6,127) -- (7,127) -- (7,202) -- (8,202) -- (8,310) -- (9,310) -- (9,454) -- (10,454) -- (10,647) -- (11,647) -- (11,870) -- (12,870) -- (12,1139) -- (13,1139) -- (13,1436) -- (14,1436) -- (14,1755) -- (15,1755) -- (15,2079) -- (16,2079) -- (16,2428) -- (17,2428) -- (17,2760) -- (18,2760) -- (18,3097) -- (19,3097) -- (19,3423) -- (20,3423) -- (20,3735) -- (21,3735) -- (21,4018) -- (22,4018) -- (22,4298) -- (23,4298) -- (23,4531) -- (24,4531) -- (24,4746) -- (25,4746) -- (25,4929) -- (26,4929) -- (26,5079) -- (27,5079) -- (27,5183) -- (28,5183) -- (28,5274) -- (29,5274) -- (29,5306) -- (30,5306) -- (30,5314) -- (31,5314) -- (31,5286) -- (32,5286) -- (32,5224) -- (33,5224) -- (33,5117) -- (34,5117) -- (34,5004) -- (35,5004) -- (35,4837) -- (36,4837) -- (36,4658) -- (37,4658) -- (37,4457) -- (38,4457) -- (38,4238) -- (39,4238) -- (39,3992) -- (40,3992) -- (40,3765) -- (41,3765) -- (41,3507) -- (42,3507) -- (42,3266) -- (43,3266) -- (43,3034) -- (44,3034) -- (44,2818) -- (45,2818) -- (45,2611) -- (46,2611) -- (46,2418) -- (47,2418) -- (47,2233) -- (48,2233) -- (48,2062) -- (49,2062) -- (49,1898) -- (50,1898) -- (50,1747) -- (51,1747) -- (51,1602) -- (52,1602) -- (52,1469) -- (53,1469) -- (53,1342) -- (54,1342) -- (54,1226) -- (55,1226) -- (55,1115) -- (56,1115) -- (56,1014) -- (57,1014) -- (57,918) -- (58,918) -- (58,831) -- (59,831) -- (59,748) -- (60,748) -- (60,674) -- (61,674) -- (61,603) -- (62,603) -- (62,540) -- (63,540) -- (63,480) -- (64,480) -- (64,427) -- (65,427) -- (65,377) -- (66,377) -- (66,333) -- (67,333) -- (67,291) -- (68,291) -- (68,255) -- (69,255) -- (69,221) -- (70,221) -- (70,192) -- (71,192) -- (71,164) -- (72,164) -- (72,141) -- (73,141) -- (73,119) -- (74,119) -- (74,101) -- (75,101) -- (75,84) -- (76,84) -- (76,70) -- (77,70) -- (77,57) -- (78,57) -- (78,47) -- (79,47) -- (79,37) -- (80,37) -- (80,30) -- (81,30) -- (81,23) -- (82,23) -- (82,18) -- (83,18) -- (83,13) -- (84,13) -- (84,10) -- (85,10) -- (85,7) -- (86,7) -- (86,5) -- (87,5) -- (87,3) -- (88,3) -- (88,2) -- (89,2) -- (89,1) -- (90,1) -- (90,1) -- (91,1) -- (91,0);
				\draw (45.5,0) node[below]{$N=5$};
			\end{tikzpicture}
			\end{tabular}
			\caption{Degree distribution of $V_{w_{N,0}}(10\Lambda_0)$ for $N = 0, \ldots, 5$.}
		\label{degree-distribution-10Lambda0}
	\end{figure}
		
	\section{Notation and conventions}
	
	Let $\widehat{\mathfrak{sl}}_2$ be the affine Kac--Moody algebra corresponding to the extended Dynkin diagram of $\mathfrak{sl}_2$. Note that this can be realized as the extended loop algebra of $\mathfrak{sl}_2$ and in that spirit let $\hat{\mathfrak{b}} \supset \hat{\mathfrak{h}}$ be the Borel and Cartan subalgebra in $\widehat{\mathfrak{sl}}_2$ corresponding to their a priori fixed counterparts in $\mathfrak{sl}_2$. Denote by $\alpha_0$ and $\alpha_1$ the simple roots and by $\alpha_0^{\vee}, \alpha_1^{\vee}$ the simple coroots.
	 Let $s_0 , s_1$ be the simple reflections and $W^\mathrm{aff} = \langle s_0, s_1 \rangle$ the Weyl group. For $w \in W^\mathrm{aff}$ and a dominant integral weight \(\Lambda = m\Lambda_0 + n\Lambda_1\) denote the associated Demazure module by $V_w(\Lambda)$. All weights occuring in $V_w(\Lambda)$ are elements of the lattice \(\Gamma = \Lambda + \mathbb{Z}\alpha_0 + \mathbb{Z}\alpha_1 \subset \hat{\mathfrak{h}}^*\).
	We usually make dependencies on the Weyl group element $w$ explicit in the notation, while dependencies on the highest weight $\Lambda$ remain implicit.		
	Demazure's character formula allows the computation of the character of $V_w(\Lambda)$ by an iterated application of certain operators on the monomial $e^\Lambda$, as follows.
 	We introduce the convention that $\sum_{i=0}^{-1} a_i = 0$ and $\sum_{i=0}^k a_i = -a_{-1} - \cdots - a_{k+1}$ for $k < -1$. Note that this is natural in the sense that Gauss's summation formula $\sum_{i=0}^k i = \frac{k(k+1)}2$ extends to all $k \in \mathbb{Z}$, as does the identity $\sum_{i=0}^k 1 = k + 1$.
 	With this convention the Demazure operator $D_j$ associated with a simple reflection $s_j$ acts on monomials $e^\lambda$ as
	\[
		D_j e^\lambda 
		= \sum_{i=0}^{\langle \lambda, \alpha_j^\vee \rangle} e^{\lambda - i \alpha_j} .
 	\]
	For an arbitrary Weyl group element $w \in W^\mathrm{aff}$ we choose a reduced decomposition $w = s_{j_1} s_{j_2} \cdots s_{j_l}$ and set $D_w = D_{j_1}  D_{j_2} \cdots  D_{j_l}$. Demazure's character formula now states that the character of $V_w(\Lambda)$ can be computed as $\operatorname{ch}_{V_w(\Lambda)} = D_w e^\Lambda$.	
	
	Let $\Measc(\Gamma)$ denote the set of measures with finite support on the lattice $\Gamma$. Affiliated with the Demazure module $V_w(\Lambda)$ is its \emph{weight distribution} $\mu_w \in \Measc(\Gamma)$ given by $\mu_w = \sum_{\lambda \in \Gamma} \dim(V_w(\Lambda)_\lambda) \delta_\lambda$.
	Each $w \in W^{\mathrm{aff}}$ equals either $w_{N,0} = s_0^{(N \bmod 2)}(s_1 s_0)^{\lfloor N/2 \rfloor}$ or $w_{N,1} = s_1^{(N \bmod 2)}(s_0 s_1)^{\lfloor N/2 \rfloor}$ for some $N \in \mathbb{N}$.
	We abbreviate the corresponding weight distribution by $\mu_{N,j} = \mu_{w_{N,j}}$ for $j \in \{ 0, 1 \}$.
	The space $\Measc^{\mathbb{C}}(\Gamma)$ of complex measures with finite support on $\Gamma$ is isomorphic to the complex vector space generated by $\{ e^\lambda : \lambda \in \Gamma \}$ by the isomorphism $\delta_\lambda \mapsto e^\lambda$.
	The Demazure operators act on the latter hence via this isomorphism on the former by
	\[
		D_j \delta_\lambda 
		= \sum_{i=0}^{\langle \lambda, \alpha_j^\vee \rangle} \delta_{\lambda - i \alpha_j}
 	\]
	for $j \in \{0, 1\}$ and $\lambda \in \Gamma$.
	Demazure's character formula now becomes $\mu_w = D_w \delta_\Lambda$ for all $w \in W^\mathrm{aff}$.
	In particular, $\mu_{N,j} = D_{w_{N,j}} \delta_\Lambda$ for all $N \geq 0$ and $j \in \{0, 1\}$.
	
	Recall that the expected value of a function $f : \Gamma \to \mathbf{R}$ with respect to a nonzero measure $\mu \in \Measc(\Gamma)$ is $\E{\mu}{f} = \frac{1}{\mu(\Gamma)} \sum_{\lambda \in \Gamma} \mu(\{\lambda\}) f(\lambda)$.
	The covariance of two functions $f$ and $g$ is $\Cov_\mu(f, g) = \E{\mu}{(f - \E{\mu}{f})(g - \E{\mu}{g})}$, and the variance of $f$ is $\Var_\mu(f) = \Cov_\mu(f,f)$.	
	
	\section{Lemmata}

	For this section we fix a dominant integral weight \(\Lambda = m\Lambda_0 + n\Lambda_1\), and the lattice \(\Gamma = \Lambda + \mathbb{Z}\alpha_0 + \mathbb{Z}\alpha_1\).
	Define coordinates $a,b$ on $\Gamma$ such that \(\lambda=\Lambda -a(\lambda)\alpha_0 - b(\lambda)\alpha_1\) for all $\lambda \in \Gamma$.
	With these coordinates
	\begin{align}
		\label{0-height}
		\langle \; \nicedot \; , \alpha_0^\vee \rangle & = m - 2(a - b) \text{, and} \\
		\label{1-height}
		\langle \; \nicedot \;, \alpha_1^\vee \rangle & = n + 2(a - b).
	\end{align}
	We start by gathering information about the operation of the Demazure operators associated with simple reflections on measures in $\Measc(\Gamma)$.
	
  	\begin{lem}
		\label{expected-value-finite-weight}
		Let $\mu \in \Measc(\Gamma)$.
		Then \(\E{D_0 \mu}{a-b} = \frac{m}{2}\) and \(\E{D_1 \mu}{a-b} = -\frac{n}{2}\).%
		\footnote{Of course, we have to assume that $D_0\mu(\Gamma) \neq 0$ and $D_1\mu(\Gamma) \neq 0$, respectively, which we silently do here and in future similar situations.}
	\end{lem}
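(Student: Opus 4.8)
The plan is to reduce everything to a single pointwise identity on Dirac measures. Concretely, I will show that for every $\lambda \in \Gamma$,
\[
  \sum_{\lambda' \in \Gamma} (D_0\delta_\lambda)(\{\lambda'\})\,(a-b)(\lambda') \;=\; \tfrac{m}{2}\,(D_0\delta_\lambda)(\Gamma),
\]
and then sum this against $\mu$. Since $D_0$ is $\mathbf{C}$-linear and $\mu = \sum_\lambda \mu(\{\lambda\})\delta_\lambda$ is a finite sum, the identity immediately upgrades to $\sum_{\lambda'}(D_0\mu)(\{\lambda'\})(a-b)(\lambda') = \tfrac m2 (D_0\mu)(\Gamma)$, and dividing by $(D_0\mu)(\Gamma) \neq 0$ gives $\E{D_0\mu}{a-b} = \tfrac m2$. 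This strategy sidesteps any worry about cancellation in $D_0\mu$, because the identity is established term by term in $\lambda$ before any summation against $\mu$ takes place.

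To prove the pointwise identity, fix $\lambda$ and put $c = (a-b)(\lambda)$ and $h = \langle\lambda,\alpha_0^\vee\rangle$, which by \eqref{0-height} satisfy $h = m - 2c$. Passing to an $s_0$-string moves only the $\alpha_0$-coordinate, so $(a-b)(\lambda - i\alpha_0) = c + i$ for every $i \in \mathbf{Z}$. Hence, reading the sums through the summation convention fixed in Section 2,
\[
  \sum_{\lambda'}(D_0\delta_\lambda)(\{\lambda'\})\,(a-b)(\lambda') = \sum_{i=0}^{h}(c+i),
  \qquad
  (D_0\delta_\lambda)(\Gamma) = \sum_{i=0}^{h}1 = h+1 .
\]
By the extended Gauss formula $\sum_{i=0}^{h} i = \tfrac{h(h+1)}{2}$, valid for all $h \in \mathbf{Z}$ (including $h = -1$ and $h \le -2$), the left-hand sum equals $(h+1)c + \tfrac{h(h+1)}{2} = (h+1)\bigl(c + \tfrac h2\bigr)$. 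Since $c + \tfrac h2 = \tfrac{m-h}{2} + \tfrac h2 = \tfrac m2$, this is $\tfrac m2 (h+1) = \tfrac m2 (D_0\delta_\lambda)(\Gamma)$, so the desired identity holds with no case distinction.

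The statement for $D_1$ is proved in the same way: now $\lambda - i\alpha_1$ has $(a-b)$-value $c - i$, one uses \eqref{1-height} in the form $\langle\lambda,\alpha_1^\vee\rangle = n + 2c$, and the computation gives $\sum_{i=0}^{\langle\lambda,\alpha_1^\vee\rangle}(c-i) = \bigl(\langle\lambda,\alpha_1^\vee\rangle + 1\bigr)\bigl(c - \tfrac{\langle\lambda,\alpha_1^\vee\rangle}{2}\bigr) = -\tfrac n2 \,(D_1\delta_\lambda)(\Gamma)$, whence $\E{D_1\mu}{a-b} = -\tfrac n2$. I do not expect a genuine obstacle here; the only delicate point is to make sure that the summation convention is exactly what makes $\sum_{i=0}^{h}(c+i) = (h+1)(c + h/2)$ valid also for negative $h$, so that the single clean computation above covers $\langle\lambda,\alpha_j^\vee\rangle \ge 0$ and $\langle\lambda,\alpha_j^\vee\rangle < 0$ simultaneously and correctly bookkeeps the signs that appear in $D_j\mu$ in the latter case.
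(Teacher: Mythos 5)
Your proof is correct and is essentially the paper's own argument: the same string computation $(a-b)(\lambda-i\alpha_0)=(a-b)(\lambda)+i$, the extended Gauss summation, and the identity $\langle\lambda,\alpha_0^\vee\rangle=m-2(a-b)(\lambda)$ from \eqref{0-height}, with your reduction to Dirac measures being just a mild reorganization (linearity first, then the per-$\lambda$ computation) of the paper's direct sum over $\mu=\sum_\lambda p_\lambda\delta_\lambda$. No gaps; the $D_1$ case is handled the same way in both.
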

	
	\begin{proof}
		Let us first compute \(\E{D_0 \mu}{a-b} = \frac{m}{2}\). With $\mu = \sum_{\lambda \in \Gamma} p_\lambda \delta_\lambda$ we have
		\begin{align*}
			D_0 \mu (\Gamma) \cdot \E{D_0 \mu}{a-b}
				& = \sum_{\lambda \in \Gamma} p_\lambda \sum_{i=0}^{\langle \lambda,\alpha_0^{\vee} \rangle} (a-b)(\lambda - i \alpha_0) \\
				& = \sum_{\lambda \in \Gamma} p_\lambda \sum_{i=0}^{\langle \lambda,\alpha_0^{\vee} \rangle} ((a-b)(\lambda) + i) \\
				& = \sum_{\lambda \in \Gamma} p_\lambda \cdot (\langle \lambda,\alpha_0^{\vee} \rangle +1)((a-b)(\lambda) + \frac 12 \langle \lambda , \alpha_0^{\vee} \rangle) \\
				& \annrel{\eqref{0-height}}{=} \sum_{\lambda \in \Gamma} p_\lambda \cdot (\langle \lambda,\alpha_0^{\vee} \rangle +1) \cdot \frac m2 \\
				& = D_0 \mu (\Gamma) \cdot \frac m2.
		\end{align*}
		The last equation holds since each $\lambda \in \Gamma$ produces exactly $\langle \lambda,\alpha_0^{\vee} \rangle +1$ successors via the operation of $D_0$ on $\delta_\lambda$. To verify \(\E{D_1 \mu}{a-b} = -\frac{n}{2}\), one pursues the same computation.
	\end{proof}
	
	We want to prove \autoref{main-theorem-expected-degree} by induction on the length $N$ of the Weyl group element $w$. To that end we investigate how certain expected values change under the operation of the Demazure operators $D_0,D_1$.
	
	\begin{lem}
		\label{diagonal-trick}
		Let $\mu \in \Measc(\Gamma)$ and $k \geq 0$.
		Then
		\begin{align*}
			\E{D_1 \mu}{a^k}
				& = \frac{\mu(\Gamma)}{D_1 \mu (\Gamma)} \E{\mu}{a^k (n+1+2(a-b))} \text{, and}\\
			\E{D_0 \mu}{b^k}
				& = \frac{\mu(\Gamma)}{D_0 \mu (\Gamma)} \E{\mu}{b^k(m+1-2(a-b))}.
		\end{align*}
	\end{lem}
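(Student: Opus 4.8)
The plan is to exploit that the Demazure operator $D_1$ moves a weight only in the $\alpha_1$-direction: since $\lambda = \Lambda - a(\lambda)\alpha_0 - b(\lambda)\alpha_1$, subtracting a multiple of $\alpha_1$ leaves the coordinate $a$ unchanged, $a(\lambda - i\alpha_1) = a(\lambda)$, while $b(\lambda - i\alpha_1) = b(\lambda) + i$. Hence the function $a^k$ is constant along the entire string of weights $\lambda, \lambda - \alpha_1, \dots, \lambda - \langle\lambda,\alpha_1^\vee\rangle\alpha_1$ produced from $\delta_\lambda$, so it factors out of the corresponding inner sum. This is the same "diagonal" phenomenon already used in the proof of \autoref{expected-value-finite-weight}.

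Concretely, writing $\mu = \sum_{\lambda\in\Gamma} p_\lambda\delta_\lambda$ and using $D_1\delta_\lambda = \sum_{i=0}^{\langle\lambda,\alpha_1^\vee\rangle}\delta_{\lambda - i\alpha_1}$, I would compute
\[
	D_1\mu(\Gamma)\cdot\E{D_1\mu}{a^k}
	= \sum_{\lambda\in\Gamma} p_\lambda \sum_{i=0}^{\langle\lambda,\alpha_1^\vee\rangle} a^k(\lambda - i\alpha_1)
	= \sum_{\lambda\in\Gamma} p_\lambda\, a^k(\lambda)\bigl(\langle\lambda,\alpha_1^\vee\rangle + 1\bigr),
\]
where the last step uses $\sum_{i=0}^{\langle\lambda,\alpha_1^\vee\rangle} 1 = \langle\lambda,\alpha_1^\vee\rangle + 1$ for every integer value of the upper limit. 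Substituting $\langle\lambda,\alpha_1^\vee\rangle + 1 = n + 1 + 2(a-b)(\lambda)$ from \eqref{1-height} rewrites the right-hand side as $\mu(\Gamma)\cdot\E{\mu}{a^k(n+1+2(a-b))}$, and dividing by $D_1\mu(\Gamma)$ gives the first identity. The second identity follows in exactly the same way, now observing that $b^k$ is constant along the $\alpha_0$-strings produced by $D_0$ and using $\langle\lambda,\alpha_0^\vee\rangle + 1 = m + 1 - 2(a-b)(\lambda)$ from \eqref{0-height}.

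I do not expect a genuine obstacle here: the argument is essentially a one-line computation of the same type as \autoref{expected-value-finite-weight}. The only point that deserves a moment's care is that the upper summation limit $\langle\lambda,\alpha_1^\vee\rangle$ (resp.\ $\langle\lambda,\alpha_0^\vee\rangle$) can be negative, so that the step $\sum_{i=0}^{\langle\lambda,\alpha_1^\vee\rangle} a^k(\lambda) = a^k(\lambda)\bigl(\langle\lambda,\alpha_1^\vee\rangle + 1\bigr)$ must be justified by the extended summation convention introduced earlier — which is precisely what that convention was set up to guarantee. The structural heart of the statement is simply the translation-invariance of $a$ under $\alpha_1$ and of $b$ under $\alpha_0$.
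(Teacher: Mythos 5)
Your proposal is correct and follows essentially the same computation as the paper: factor out $a^k$ (resp.\ $b^k$) since it is constant along $\alpha_1$-strings (resp.\ $\alpha_0$-strings), count the string length as $\langle\lambda,\alpha_1^\vee\rangle+1$, and substitute via \eqref{1-height} (resp.\ \eqref{0-height}). Your remark on the extended summation convention for negative upper limits is exactly the point the paper's convention is designed to handle.
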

	
	\begin{proof}
		Let $\mu = \sum_{\lambda \in \Gamma} p_\lambda \delta_\lambda$. Then
		\begin{align*}
				D_1 \mu (\Gamma) \cdot \E{D_1 \mu}{a^k}
				& = \sum_{\lambda \in \Gamma} p_\lambda \sum_{i=0}^{\langle \lambda,\alpha_1^{\vee} \rangle} a^k (\lambda - i \alpha_1) \\
				& = \sum_{\lambda \in \Gamma} p_\lambda \sum_{i=0}^{\langle \lambda,\alpha_1^{\vee} \rangle} a^k (\lambda) \\
				& = \sum_{\lambda \in \Gamma} p_\lambda \cdot (\langle \lambda,\alpha_1^{\vee} \rangle +1) \cdot a^k (\lambda) \\
				& = \sum_{\lambda \in \Gamma} p_\lambda \cdot (n + 2(a-b)(\lambda) + 1) \cdot a^k (\lambda) \\
				& = \mu (\Gamma) \cdot \E{\mu}{a^k (n +1 + 2(a-b))}
		\end{align*}
		The computation is analogous for $\E{D_0 \mu}{b^k}$.
	\end{proof}
	
	By setting $k=0$ in \autoref{diagonal-trick} we obtain:
	
	\begin{cor}
		\label{recursion-total-mass}
		Let $\mu \in \Measc(\Gamma)$. Then, the total mass of $D_0 \mu$ and $D_1 \mu$ is given by
		\begin{align*}
			D_0\mu(\Gamma) & = \mu(\Gamma)(m+1 - 2 \E{\mu}{a-b}) \text{, and} \\
			D_1 \mu (\Gamma) & = \mu(\Gamma)(n+1+2\E{\mu}{a-b}).
		\end{align*}
	\end{cor}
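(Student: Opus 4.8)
The plan is to specialize \autoref{diagonal-trick} to the case $k = 0$. On $\Gamma$ one has $a^0 \equiv b^0 \equiv 1$, and for any nonzero $\nu \in \Measc(\Gamma)$ the expected value of the constant function $1$ is $\E{\nu}{1} = \frac{1}{\nu(\Gamma)}\sum_{\lambda} \nu(\{\lambda\}) = 1$. Thus the two identities of \autoref{diagonal-trick} become
\[
	1 = \frac{\mu(\Gamma)}{D_1\mu(\Gamma)}\,\E{\mu}{n+1+2(a-b)}
	\qquad\text{and}\qquad
	1 = \frac{\mu(\Gamma)}{D_0\mu(\Gamma)}\,\E{\mu}{m+1-2(a-b)} .
\]
By linearity of the expected value, $\E{\mu}{n+1+2(a-b)} = n+1+2\E{\mu}{a-b}$ and $\E{\mu}{m+1-2(a-b)} = m+1-2\E{\mu}{a-b}$, so multiplying through by $D_1\mu(\Gamma)$, respectively $D_0\mu(\Gamma)$, yields the asserted formulas.

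The only thing to keep track of is the standing assumption $D_0\mu(\Gamma)\neq 0$ and $D_1\mu(\Gamma)\neq 0$ needed for the expected values to be defined, which we adopt as in the footnote to \autoref{expected-value-finite-weight}. There is essentially no obstacle here: the statement is a direct corollary of \autoref{diagonal-trick}. Should one wish to dispense with the nonvanishing hypothesis, one may instead expand $D_j\mu(\Gamma) = \sum_{\lambda} p_\lambda(\langle\lambda,\alpha_j^\vee\rangle + 1)$ directly and substitute \eqref{0-height} and \eqref{1-height}, arriving at the same identities.
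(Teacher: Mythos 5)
Your proposal is correct and matches the paper's own derivation: the corollary is obtained exactly by setting $k=0$ in \autoref{diagonal-trick}, using that $a^0 \equiv b^0 \equiv 1$ and linearity of the expected value. The paper simply states ``by setting $k=0$'' without spelling out these routine steps, so your write-up is just a more explicit version of the same argument.
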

	
	Resolving those equations for the weight distribution $\mu_w$ we derive the following dimension formulas for the Demazure module $V_w(\Lambda)$.
	
	\begin{cor}
	\label{total-mass}
		Let $N \geq 0$. Then the following dimension formulas hold:
		\begin{align*}
			\mu_{N,0}(\Gamma)
			&= \begin{cases} 1 & N=0 \\ (m+1)(m+n+1)^{N-1} & N \geq 1, \end{cases} \\
			\mu_{N,1}(\Gamma)
			&= \begin{cases} 1 & N=0 \\ (n+1)(m+n+1)^{N-1} & N \geq 1. \end{cases}
		\end{align*}
	\end{cor}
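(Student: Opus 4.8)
The plan is a short induction on $N$ that simply feeds \autoref{recursion-total-mass} and \autoref{expected-value-finite-weight} into the snake-shaped recursion relating $\mu_{N,j}$ to $\mu_{N-1,j}$.

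First I would record that recursion. Writing $w_{N,0} = s_0^{(N\bmod 2)}(s_1s_0)^{\lfloor N/2\rfloor}$ as an alternating reduced word of length $N$, one checks directly that $w_{N,0} = s_0\,w_{N-1,0}$ when $N$ is odd and $w_{N,0} = s_1\,w_{N-1,0}$ when $N$ is even and positive, and symmetrically for $w_{N,1}$ with the roles of $s_0$ and $s_1$ interchanged. Hence, via Demazure's character formula in the form $\mu_{N,j} = D_{w_{N,j}}\delta_\Lambda$,
\[
	\mu_{N,0} = \begin{cases} D_0\mu_{N-1,0} & N \text{ odd},\\ D_1\mu_{N-1,0} & N \text{ even}, N \geq 2,\end{cases}
\]
and analogously $\mu_{N,1} = D_1\mu_{N-1,1}$ for $N$ odd, $\mu_{N,1} = D_0\mu_{N-1,1}$ for $N$ even and positive.

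Next I would run the induction. For the base cases: $\mu_{0,j} = \delta_\Lambda$ has total mass $1$; and since $\mu_{1,0} = D_0\delta_\Lambda$, \autoref{recursion-total-mass} gives $\mu_{1,0}(\Gamma) = 1\cdot\bigl(m+1-2\E{\delta_\Lambda}{a-b}\bigr) = m+1$ because $a(\Lambda) = b(\Lambda) = 0$; symmetrically $\mu_{1,1}(\Gamma) = n+1$. For the inductive step with $N \geq 2$: by the recursion just established, $\mu_{N-1,0}$ is of the form $D_1(\,\cdot\,)$ when $N$ is odd and of the form $D_0(\,\cdot\,)$ when $N$ is even — here the case $N=2$ is covered because $\mu_{1,0} = D_0\delta_\Lambda$ — so \autoref{expected-value-finite-weight} gives $\E{\mu_{N-1,0}}{a-b} = -\tfrac n2$ respectively $\tfrac m2$. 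Substituting into \autoref{recursion-total-mass} and using $m+1-2(-\tfrac n2) = n+1+2(\tfrac m2) = m+n+1$ yields in both parities
\[
	\mu_{N,0}(\Gamma) = \mu_{N-1,0}(\Gamma)\cdot(m+n+1).
\]
Together with $\mu_{1,0}(\Gamma) = m+1$ this gives $\mu_{N,0}(\Gamma) = (m+1)(m+n+1)^{N-1}$ for every $N \geq 1$; the argument for $\mu_{N,1}(\Gamma)$ is verbatim the same after swapping $0 \leftrightarrow 1$ and $m \leftrightarrow n$.

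I do not expect a genuine obstacle: the content is entirely in the two preceding results. The only point requiring care is the bookkeeping of parity in the snake recursion, and in particular checking that $N=2$ — where $\mu_{1,0}$ is $D_0$ applied to the atom $\delta_\Lambda$ rather than to an earlier $\mu$ — is handled correctly, so that the uniform factor $m+n+1$ is legitimately produced for all $N \geq 2$.
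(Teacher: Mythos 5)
Your proposal is correct and matches the paper's own argument: induction on $N$ along the snake recursion $\mu_{N,0}=D_0\mu_{N-1,0}$ or $D_1\mu_{N-1,0}$, feeding \autoref{expected-value-finite-weight} into \autoref{recursion-total-mass} to get the uniform factor $m+n+1$ for $N\geq 2$. The only (harmless) difference is cosmetic: you obtain $\mu_{1,0}(\Gamma)=m+1$ from \autoref{recursion-total-mass} applied to $\delta_\Lambda$, while the paper computes $D_0\delta_\Lambda(\Gamma)=\langle\Lambda,\alpha_0^\vee\rangle+1$ directly.
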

	
	\begin{proof}
		We restrict to the Weyl group elements starting in $s_0$, for the argumentation in the other case is similar. For $N=0$ one has $\mu_{0,0}(\Gamma) = \delta_\Lambda (\Gamma) = 1$, and if $N=1$, then $\mu_{1,0}(\Gamma) = D_0 \delta_\Lambda (\Gamma) = \langle \Lambda, \alpha_0^\vee \rangle + 1 = m+1$. We proceed by induction on $N \geq 2$.
		\begin{align*}
			\mu_{N,0}(\Gamma)
				& = \begin{cases} D_1 \mu_{N-1,0}(\Gamma) & \text{if $N$ even,} \\ D_0 \mu_{N-1,0}(\Gamma) & \text{if $N$ odd}\end{cases} \\
				& = \begin{cases} \mu_{N-1,0}(\Gamma)(n+1 + 2 \E{D_0 \mu_{N-2,0}}{a-b}) & \text{if $N$ even,} \\ \mu_{N-1,0}(\Gamma)(m+1 - 2 \E{D_1 \mu_{N-2,0}}{a-b}) & \text{if $N$ odd}\end{cases} \\
				& = \begin{cases} \mu_{N-1,0}(\Gamma)(n+1 + m) & \text{if $N$ even,} \\ \mu_{N-1,0}(\Gamma)(m+1 +n) & \text{if $N$ odd}\end{cases}
		\end{align*}
		The second equation follows from \autoref{recursion-total-mass} and the third by replacing the expected values by their actual values computed in \autoref{expected-value-finite-weight}. Hence by induction $\mu_{N,0}(\Gamma) = (m+1)(m+n+1)^{N-1}$.
	\end{proof}
	
	\begin{rem}
		\autoref{total-mass} is Sanderson's dimension formula \cite[Th.\ 1]{MR1389364}.
		Her original proof uses the path model for highest weight representations of Kac--Moody algebras.
	\end{rem}
	
	If we consider two consecutive applications of Demazure operators associated with simple reflections, \autoref{diagonal-trick} for $k = 1$ can be stated as follows.
		 
	\begin{lem}
		\label{recursion-expected-value}
		Let $\mu \in \Measc(\Gamma)$.
		Then we have the following equations:
		\begin{align*}
			\E{D_1 D_0 \mu}{a} & = \frac{D_0 \mu(\Gamma)}{D_1 D_0 \mu(\Gamma)}\Big((m+n+1)\E{D_0 \mu}{a} + 2 \Cov_{D_0 \mu}(a,a-b)\Big) \text{, and} \\
			\E{D_0 D_1 \mu}{b} & = \frac{D_1 \mu(\Gamma)}{D_0 D_1 \mu(\Gamma)}\Big((m+n+1)\E{D_1 \mu}{b} - 2 \Cov_{D_1 \mu}(b,a-b)\Big).
		\end{align*}
	\end{lem}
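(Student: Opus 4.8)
The plan is to obtain both identities directly from \autoref{diagonal-trick} with $k=1$, combined with \autoref{expected-value-finite-weight}; no induction is needed here, since everything reduces to algebraic manipulation of the first and second moments of a single measure. For the first identity I would apply \autoref{diagonal-trick} (first formula, $k=1$) to the measure $D_0\mu$ in place of $\mu$, which gives
\[
	\E{D_1 D_0 \mu}{a} = \frac{D_0\mu(\Gamma)}{D_1 D_0 \mu(\Gamma)}\,\E{D_0 \mu}{a(n+1+2(a-b))}.
\]
It then remains to show that $\E{D_0\mu}{a(n+1+2(a-b))} = (m+n+1)\E{D_0\mu}{a} + 2\Cov_{D_0\mu}(a,a-b)$.

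The key move is to expand by linearity, $\E{D_0\mu}{a(n+1+2(a-b))} = (n+1)\E{D_0\mu}{a} + 2\E{D_0\mu}{a(a-b)}$, and then to write the cross moment as a covariance plus a product of means:
\[
	\E{D_0\mu}{a(a-b)} = \Cov_{D_0\mu}(a,a-b) + \E{D_0\mu}{a}\,\E{D_0\mu}{a-b}.
\]
Now substitute $\E{D_0\mu}{a-b} = \tfrac m2$ from \autoref{expected-value-finite-weight}; the leftover first-moment product contributes $m\,\E{D_0\mu}{a}$, which merges with the $(n+1)\E{D_0\mu}{a}$ term to give exactly $(m+n+1)\E{D_0\mu}{a}$, and the covariance term is carried along with coefficient $2$. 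That is the whole argument.

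For the second identity I would run the mirror-image computation: apply the second formula of \autoref{diagonal-trick} with $k=1$ to $D_1\mu$, obtaining $\E{D_0 D_1 \mu}{b} = \frac{D_1\mu(\Gamma)}{D_0 D_1 \mu(\Gamma)}\E{D_1\mu}{b(m+1-2(a-b))}$, expand by linearity, rewrite $\E{D_1\mu}{b(a-b)}$ as $\Cov_{D_1\mu}(b,a-b) + \E{D_1\mu}{b}\,\E{D_1\mu}{a-b}$, and substitute $\E{D_1\mu}{a-b} = -\tfrac n2$ from \autoref{expected-value-finite-weight}. Here the $-2(a-b)$ factor in the formula pairs with the sign of $-\tfrac n2$ so that the first-moment product again contributes $+\,n\,\E{D_1\mu}{b}$, combining with $(m+1)\E{D_1\mu}{b}$ to yield $(m+n+1)\E{D_1\mu}{b}$, while the covariance now appears with coefficient $-2$.

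There is no genuine obstacle in the calculation; the only points requiring care are the signs in the second identity and the observation that the cross moment should be rewritten as a covariance \emph{precisely} so that the residual product of first moments, once evaluated via \autoref{expected-value-finite-weight}, recombines with the linear term into the symmetric quantity $m+n+1$.
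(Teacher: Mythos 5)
Your proposal is correct and follows essentially the same route as the paper: apply \autoref{diagonal-trick} with $k=1$ to $D_0\mu$ (resp.\ $D_1\mu$), split the cross moment into covariance plus product of means, and substitute the values of $\E{D_0\mu}{a-b}$ and $\E{D_1\mu}{a-b}$ from \autoref{expected-value-finite-weight}. Your bookkeeping of the factor $2$ on the covariance term is in fact cleaner than the paper's displayed computation, which drops that factor in its intermediate lines even though the lemma statement has it.
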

	
	\begin{proof}
		We prove the first assertion, the second being analogous. From \autoref{diagonal-trick} we obtain \(\E{D_1 D_0 \mu}{a} = \frac{D_0 \mu(\Gamma)}{D_1 D_0 \mu (\Gamma)} \E{D_0\mu}{a (n+1+2(a-b))}\).
			As $\E{D_0\mu}{a-b} = \frac m2$ by \autoref{expected-value-finite-weight}, the second factor of the right-hand side is
		\begin{align*}
			\E{D_0\mu}{a (n+1+2(a-b))}
				& = \E{D_0\mu}{a (n+1)} + 2 \E{D_0 \mu}{a(a-b)} \\
				& = (n+1)\E{D_0\mu}{a} + 2 \E{D_0 \mu}{a-b} \E{D_0 \mu}{a} \\ &\quad + \Cov_{D_0 \mu}(a,a-b) \\
				& = (m+n+1)\E{D_0\mu}{a} + \Cov_{D_0 \mu}(a,a-b). \qedhere
		\end{align*}
	\end{proof}

	Note that \autoref{recursion-expected-value} is not a recurrence relation for the expected degree, as covariances of the previous distribution are involved.
	
	\begin{lem}
		\label{covariance}
		Let $\mu \in \Measc(\Gamma)$. Then
		\begin{align*}
			\Cov_{D_0 \mu} (a,a-b) & = \Var_{D_0 \mu} (a-b)
			& \text{and } \Cov_{D_0 \mu} (b,a-b) & = 0 , \\
			\Cov_{D_1 \mu} (a,a-b) &= 0
			& \text{and } \Cov_{D_1 \mu} (b,a-b) & = -\Var_{D_1 \mu} (a-b) .
		\end{align*}
	\end{lem}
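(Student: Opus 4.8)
The plan is to reduce all four identities to a single computation, using bilinearity of the covariance together with the decompositions $a = b + (a-b)$ and $b = a - (a-b)$. Concretely, it suffices to prove the two vanishing statements
\[
\Cov_{D_0 \mu}(b, a-b) = 0 \qquad\text{and}\qquad \Cov_{D_1 \mu}(a, a-b) = 0 ;
\]
for then $\Cov_{D_0 \mu}(a, a-b) = \Cov_{D_0 \mu}(b, a-b) + \Var_{D_0 \mu}(a-b) = \Var_{D_0 \mu}(a-b)$, and symmetrically $\Cov_{D_1 \mu}(b, a-b) = \Cov_{D_1 \mu}(a, a-b) - \Var_{D_1 \mu}(a-b) = -\Var_{D_1 \mu}(a-b)$. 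So only the two vanishing statements require work, and the second is just the mirror image of the first with the roles of $\alpha_0$ and $\alpha_1$ interchanged.

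For $\Cov_{D_0 \mu}(b, a-b) = 0$ the point is that $D_0$ translates only in the $\alpha_0$-direction: writing $\mu = \sum_\lambda p_\lambda \delta_\lambda$, the point $\lambda - i\alpha_0$ has the same $b$-coordinate $b(\lambda)$ as $\lambda$, while $(a-b)(\lambda - i\alpha_0) = (a-b)(\lambda) + i$. Hence, expanding $D_0 \mu(\Gamma)\cdot\E{D_0\mu}{b\,(a-b)}$ as a double sum over $\lambda$ and $0 \le i \le \langle\lambda, \alpha_0^\vee\rangle$, one factors $b(\lambda)$ out of the inner sum and is left with exactly the sum $\sum_{i=0}^{\langle\lambda,\alpha_0^\vee\rangle}\big((a-b)(\lambda)+i\big)$ that already appeared in the proof of \autoref{expected-value-finite-weight}. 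By Gauss's summation formula in the extended form valid for all integers (as adopted in the paper), this equals $(\langle\lambda,\alpha_0^\vee\rangle+1)\big((a-b)(\lambda)+\tfrac12\langle\lambda,\alpha_0^\vee\rangle\big)$, which by \eqref{0-height} is $(\langle\lambda,\alpha_0^\vee\rangle+1)\cdot\tfrac m2$. Summing over $\lambda$ gives $\E{D_0\mu}{b\,(a-b)} = \tfrac m2\,\E{D_0\mu}{b}$, and since $\E{D_0\mu}{a-b} = \tfrac m2$ by \autoref{expected-value-finite-weight}, the covariance $\Cov_{D_0\mu}(b,a-b) = \tfrac m2\,\E{D_0\mu}{b} - \E{D_0\mu}{b}\cdot\tfrac m2$ vanishes.

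The argument for $\Cov_{D_1\mu}(a, a-b) = 0$ is identical after swapping the simple roots: $D_1$ leaves the $a$-coordinate fixed and sends $a-b$ to $(a-b) - i$, so the same manipulation, now using \eqref{1-height} and the fiber size $\langle\lambda,\alpha_1^\vee\rangle + 1$, yields $\E{D_1\mu}{a\,(a-b)} = -\tfrac n2\,\E{D_1\mu}{a}$; combined with $\E{D_1\mu}{a-b} = -\tfrac n2$ this gives the claim. I expect no genuine obstacle here: it is the computation of \autoref{expected-value-finite-weight} carried along with a ``passive'' coordinate, the only point deserving care being that the Gauss-type identity is used in its form valid for negative $\langle\lambda,\alpha_j^\vee\rangle$, so that nothing breaks for an arbitrary $\mu \in \Measc(\Gamma)$. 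One may also phrase the heart of the matter probabilistically: conditionally on $\lambda$, the value of $a-b$ after applying $D_0$ (resp.\ $D_1$) has mean $\tfrac m2$ (resp.\ $-\tfrac n2$) independently of $\lambda$, hence is uncorrelated with every function of $\lambda$ alone, in particular with $b$ (resp.\ $a$).
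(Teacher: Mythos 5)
Your proof is correct and follows essentially the same route as the paper: the two vanishing covariances are established by the identical fiberwise computation (factoring out the coordinate fixed by $D_j$ and applying the extended Gauss summation together with \eqref{0-height}, \eqref{1-height} and \autoref{expected-value-finite-weight}), and the remaining two identities are deduced by bilinearity exactly as in the paper's proof.
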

	
	\begin{proof}
		Let us first treat \(\Cov_{D_0 \mu}(a-b,b) = 0\).
		Indeed $\Cov_{D_0 \mu}(a-b,b) = \E{D_0 \mu}{(a-b)b} - \E{D_0 \mu}{a-b} \E{D_0 \mu}{b}$ and with $\mu = \sum_{\lambda \in \Gamma} p_\lambda \delta_\lambda$ we have
		\begin{align*}
				D_0 \mu (\Gamma) \cdot \E{D_0 \mu}{(a-b)b}
				& = \sum_{\lambda \in \Gamma} p_\lambda \sum_{i=0}^{\langle \lambda,\alpha_0^{\vee} \rangle} (a-b)(\lambda - i \alpha_0) \cdot b(\lambda - i \alpha_0) \\
				& = \sum_{\lambda \in \Gamma} p_\lambda \cdot b(\lambda) \sum_{i=0}^{\langle \lambda,\alpha_0^{\vee} \rangle} ((a-b)(\lambda) + i) \\
				&  \annrel{\eqref{0-height}}{=} \sum_{\lambda \in \Gamma} p_\lambda \cdot b(\lambda) \cdot (\langle \lambda,\alpha_0^{\vee} \rangle +1) \cdot \frac m2 \\
				& = \frac m2 \sum_{\lambda \in \Gamma} p_\lambda \sum_{i=0}^{\langle \lambda,\alpha_0^{\vee} \rangle} b(\lambda - i \alpha_0) \\
				& = \E{D_0 \mu}{a-b} \cdot D_0 \mu (\Gamma) \cdot \E{D_0 \mu}{b}.
		\end{align*}
		For the last equation see  \autoref{expected-value-finite-weight}.
		The same argument shows that $\Cov_{D_1\mu}(a,a-b) = 0$.
		The remaining claims follow from the bilinearity of the covariance.
		To be precise,
		\begin{align*}
			\Cov_{D_0 \mu} (a,a-b)
				& = \Cov_{D_0 \mu} ((a-b) + b,a-b) \\
				& =  \Cov_{D_0 \mu} (a-b,a-b) +  \Cov_{D_0 \mu} (b,a-b) \\
				& =  \Cov_{D_0 \mu} (a-b,a-b)
		\end{align*}
		The computation of $\Cov_{D_1 \mu} (b,a-b)$ is essentially the same.
	\end{proof}

	The variances appearing in \autoref{covariance} can be computed via Sanderson's formula \cite[Th.\ 1]{MR1407880} for the real character of the Demazure module $V_w(\Lambda)$.
	
	\begin{lem}
		\label{variance-finite-weights}
		For \(N \geq 1\) we have
		\begin{align*}
			\Var_{\mu_{N,0}} (a-b) &= \frac{m(m+2) + (N-1)(m+n)(m+n+2)}{12} \text{, and} \\
			\Var_{\mu_{N,1}} (a-b) &= \frac{n(n+2) + (N-1)(m+n)(m+n+2)}{12}.
		\end{align*}
	\end{lem}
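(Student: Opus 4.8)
\medskip

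\textbf{Proof proposal.} The strategy is to read off both variances from Sanderson's product formula for the real character of the Demazure module \cite[Th.\ 1]{MR1407880}, interpreted probabilistically. Recall that passing from $\operatorname{ch}_{V_w(\Lambda)}$ to its real character amounts to specializing in the imaginary direction, $e^{-\delta}\mapsto 1$ with $\delta = \alpha_0 + \alpha_1$ the null root. Writing $\lambda = \Lambda - a(\lambda)\alpha_0 - b(\lambda)\alpha_1 = \Lambda - (a-b)(\lambda)\,\alpha_0 - b(\lambda)\,\delta$, this specialization sends $e^\lambda$ to $e^\Lambda\,x^{(a-b)(\lambda)}$ with $x = e^{\alpha_0}$; hence the real character of $V_{w_{N,0}}(\Lambda)$ is $e^\Lambda$ times the generating polynomial of the pushforward of $\mu_{N,0}$ along the coordinate $a-b$. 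Sanderson's theorem expresses this polynomial, up to an overall power of $x$, as a product of factors $[\ell]_x := 1 + x + \dots + x^{\ell-1}$, and matching the number of monomials with the dimension formula $\mu_{N,0}(\Gamma) = (m+1)(m+n+1)^{N-1}$ of \autoref{total-mass} forces exactly $N$ factors: one of length $m+1$ and $N-1$ of length $m+n+1$ (for $\mu_{N,1}$: one of length $n+1$, the other $N-1$ of length $m+n+1$).

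Once this is in hand the computation is immediate. A product $x^{c}\prod_{k=1}^{N}[\ell_k]_x$ is the generating function of the law of $c + X_1 + \dots + X_N$ with the $X_k$ independent and $X_k$ uniform on $\ell_k$ consecutive integers; this is the sense in which ``the finite weight is distributed like a sum of independent random variables, all but one distributed identically''. The variance of the uniform distribution on $\ell$ consecutive integers is $\frac{\ell^2-1}{12}$, so by additivity of the variance over independent summands,
\[
	\Var_{\mu_{N,0}}(a-b) = \frac{(m+1)^2-1}{12} + (N-1)\,\frac{(m+n+1)^2-1}{12} = \frac{m(m+2) + (N-1)(m+n)(m+n+2)}{12},
\]
and likewise $\Var_{\mu_{N,1}}(a-b) = \frac{n(n+2) + (N-1)(m+n)(m+n+2)}{12}$, the two statements being exchanged by the diagram automorphism $\alpha_0\leftrightarrow\alpha_1$, which swaps $m\leftrightarrow n$ and $a\leftrightarrow b$ and hence negates $a-b$, leaving the variance unchanged.

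The only delicate point is the bookkeeping in the first paragraph: one must reconcile Sanderson's normalization of the finite weight with the coordinate $a-b$ used here and make sure no extraneous factor of $2$ creeps in --- the coweight pairing $\langle\;\nicedot\;,\alpha_1^\vee\rangle = n + 2(a-b)$ does carry such a factor, but the variable recorded by the real character is $a-b$ itself. The cleanest safeguard is to check the two smallest cases directly: for $N=1$, $\mu_{1,0} = D_0\delta_\Lambda$ makes $a-b$ literally uniform on $\{0,1,\dots,m\}$, giving variance $\frac{(m+1)^2-1}{12}=\frac{m(m+2)}{12}$; and for $N=2$, $m=n=1$, one computes $\mu_{2,0}$ explicitly and finds its $a-b$–generating polynomial proportional to $(1+x)(1+x+x^2)$, hence variance $\frac{3}{12}+\frac{8}{12}=\frac{11}{12}$, matching the formula. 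Beyond \cite[Th.\ 1]{MR1407880}, nothing is invoked except additivity of variance under convolution and the elementary variance of a discrete uniform law.

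Alternatively --- and without citing the product formula --- one can proceed recursively. For \emph{any} $\mu\in\Measc(\Gamma)$ the measure $D_0\mu$ (resp.\ $D_1\mu$) has $a-b$ distributed symmetrically about its mean $\frac m2$ (resp.\ $-\frac n2$): indeed $D_0\delta_\lambda$ spreads $\delta_\lambda$ over the string of $a-b$–values $\{(a-b)(\lambda),\dots,m-(a-b)(\lambda)\}$, which is already symmetric about $\frac m2$ (the convention for $\langle\lambda,\alpha_0^\vee\rangle<0$ produces the analogous symmetric string with sign $-1$). Symmetry annihilates the third central moment, so expanding $\sum_{i=0}^{\langle\lambda,\alpha_0^\vee\rangle}\bigl((a-b)(\lambda)+i\bigr)^2$ --- a cubic in $(a-b)(\lambda)$ --- and evaluating it against $\mu_{N-1,0}$ yields $\Var_{\mu_{N,0}}(a-b)$ in terms of $\E{\mu_{N-1,0}}{a-b}$ and $\Var_{\mu_{N-1,0}}(a-b)$ alone, a genuine recurrence that one then solves using \autoref{total-mass} and \autoref{expected-value-finite-weight}. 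This route is self-contained but computationally heavier, so the plan above via \cite[Th.\ 1]{MR1407880} is preferable.
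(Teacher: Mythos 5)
Your proposal is correct and follows essentially the same route as the paper: both read off the pushforward $(a-b)_*\mu_{N,j}$ from Sanderson's formula \cite[Th.\ 1]{MR1407880} as a shifted convolution of one uniform factor of length $m+1$ (resp.\ $n+1$) with $N-1$ uniform factors of length $m+n+1$, and then apply additivity of variance together with $\Var(\delta_{[\ell]}) = \frac{(\ell-1)(\ell+1)}{12}$. The extra checks and the alternative recursive argument you sketch are not needed, but they do no harm.
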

	
	\begin{proof}
		We only compute $\mu_{N,0}$, the computation for $\mu_{N,1}$ being analogous.
		Let $q$ be a variable.
		For $k \geq 0$ we define the $q$-integer $[k]_q = \sum_{i=0}^{k-1} q^i$.
		Sanderson's formula \cite[Th.\ 1]{MR1407880} for the real character of $V_{w_{N,0}}(\Lambda)$ states that
		\[
			\sum_{\lambda \in \Gamma} \dim(V_{w_{N,0}}(\Lambda)_\lambda)q^{(a-b)(\lambda)}
			= q^{-(m+n)\lfloor N/2 \rfloor} [m+1]_q [m+n+1]_q^{N-1} .
		\]
		For $k \geq 0$ we define $\delta_{[k]}  = \sum_{i=0}^{k-1} \delta_i \in \Measc(\mathbb{Z})$.
		The linear map $\mathbb{C}[q, q^{-1}] \to \Measc^{\mathbb{C}}(\mathbb{Z})$ given by $q^k \mapsto \delta_k$ is an isomorphism of algebras, the multiplication of measures being convolution.
		This isomorphism maps $[k]_q$ to $\delta_{[k]}$ and hence
		\[
			\sum_{\lambda \in \Gamma} \dim(V_{w_{N,0}}(\Lambda)_\lambda)\delta_{(a-b)(\lambda)}
			= \delta_{-(m+n)\lfloor N/2 \rfloor} * \delta_{[m+1]} * \delta_{[m+n+1]}^{*(N-1)} .
		\]
		The measure on the left-hand side is by definition the push-forward measure $(a-b)_*\mu$.
		By straightforward computation
		$
			\Var(\delta_{[k]})
			= \frac 1k \sum_{i=0}^{k-1} \left( i - \frac{k-1}2 \right)^2
			=  \frac{(k-1)(k+1)}{12}
		$.
		Hence
		\begin{align*}
			\Var_{\mu_{N,0}}(a-b)
			  &= \Var((a-b)_*\mu) \\
			  &= \Var \left( \delta_{-(m+n)\lfloor N/2 \rfloor} * \delta_{[m+1]} * \delta_{[m+n+1]}^{*(N-1)} \right) \\
				& = \Var(\delta_{[m+1]}) + (N-1)\Var(\delta_{[m+n+1]}) \\
				& = \frac{m(m+2)}{12} + (N-1)\frac{(m+n)(m+n+2)}{12} . \qedhere
		\end{align*}
	\end{proof}
	
	Combining \autoref{recursion-expected-value}, \autoref{covariance}, and \autoref{variance-finite-weights}, we finally obtain recurrence relations.
	
	\begin{lem}[Recurrence relation]
		\label{recursion-formula}
		Let $N \geq 2$. Then the following recurrence relations hold:
		
		\begin{align}
			\label{recurrence-0-a}
			\E{\mu_{N,0}}{a}
			&= \frac{\mu_{N-1,0}(\Gamma)}{\mu_{N,0}(\Gamma)}
			\biggl(
				(m+n+1)\E{\mu_{N-1,0}}{a} \\ &\quad + \frac{m(m+2) + (N-2)(m+n)(m+n+2)}{6}
			\biggr)
			\text{ if $N$ even,} \notag
		\\
			\label{recurrence-0-b}
			\E{\mu_{N,0}}{b}
			&= \frac{\mu_{N-1,0}(\Gamma)}{\mu_{N,0}(\Gamma)}
			\biggl(
				(m+n+1)\E{\mu_{N-1,0}}{b} \\ &\quad + \frac{m(m+2) + (N-2)(m+n)(m+n+2)}{6}
			\biggr)
			\text{ if $N$ odd,} \notag
		\\
			\label{recurrence-1-b}
			\E{\mu_{N,1}}{b}
			&= \frac{\mu_{N-1,1}(\Gamma)}{\mu_{N,1}(\Gamma)}
			\biggl(
				(m+n+1)\E{\mu_{N-1,1}}{b} \\ &\quad + \frac{n(n+2) + (N-2)(m+n)(m+n+2)}{6}
			\biggr)
			\text{ if $N$ even,} \notag
		\\
			\label{recurrence-1-a}
			\E{\mu_{N,1}}{a}
			&= \frac{\mu_{N-1,1}(\Gamma)}{\mu_{N,1}(\Gamma)}
			\biggl(
				(m+n+1)\E{\mu_{N-1,1}}{a} \\ &\quad + \frac{n(n+2) + (N-2)(m+n)(m+n+2)}{6}
			\biggr)
		\text{ if $N$ odd.} \notag
		\end{align}	\end{lem}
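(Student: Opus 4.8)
The plan is simply to chain together \autoref{recursion-expected-value}, \autoref{covariance}, and \autoref{variance-finite-weights}, keeping careful track of which simple reflection sits at the left end of a reduced word for $w_{N,j}$. Because the parity of $N$ alternates, this left end alternates between $s_0$ and $s_1$, which is precisely the snake pattern. I will carry out \eqref{recurrence-0-a} in detail and then indicate the (entirely routine) modifications needed for the other three identities.

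So let $N \geq 2$ be even. Reading off the reduced decomposition $w_{N,0} = (s_1 s_0)^{N/2}$, one has $\mu_{N,0} = D_1 D_0 \mu_{N-2,0}$ and $\mu_{N-1,0} = D_0 \mu_{N-2,0}$. Applying the first identity of \autoref{recursion-expected-value} with $\mu = \mu_{N-2,0}$ and substituting these expressions gives
\[
  \E{\mu_{N,0}}{a}
  = \frac{\mu_{N-1,0}(\Gamma)}{\mu_{N,0}(\Gamma)}
    \Bigl( (m+n+1)\E{\mu_{N-1,0}}{a} + 2\Cov_{\mu_{N-1,0}}(a,a-b) \Bigr).
\]
Since $\mu_{N-1,0} = D_0 \mu_{N-2,0}$, the first equation of \autoref{covariance} yields $\Cov_{\mu_{N-1,0}}(a,a-b) = \Var_{\mu_{N-1,0}}(a-b)$, and because $N-1 \geq 1$ we may invoke \autoref{variance-finite-weights} to evaluate this as $\frac{1}{12}\bigl(m(m+2) + (N-2)(m+n)(m+n+2)\bigr)$. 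Doubling and substituting produces exactly \eqref{recurrence-0-a}.

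For \eqref{recurrence-0-b} take $N$ odd, so that $\mu_{N,0} = D_0 \mu_{N-1,0} = D_0 D_1 \mu_{N-2,0}$; here one uses the second identities of \autoref{recursion-expected-value} and \autoref{covariance}, observing that the sign flip $\Cov_{D_1\mu}(b,a-b) = -\Var_{D_1\mu}(a-b)$ combines with the $-2$ in \autoref{recursion-expected-value} to again contribute $+2\Var_{\mu_{N-1,0}}(a-b)$, and then applies \autoref{variance-finite-weights} to $\mu_{N-1,0}$ as before. Equations \eqref{recurrence-1-b} and \eqref{recurrence-1-a} are proved identically starting from the reduced decompositions of $w_{N,1}$, the only change being that the relevant variance is now $\frac{1}{12}\bigl(n(n+2) + (N-2)(m+n)(m+n+2)\bigr)$ by the second formula of \autoref{variance-finite-weights}. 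I do not anticipate any real obstacle; the one thing requiring care is the index bookkeeping: one must confirm that feeding $N-1$ into \autoref{variance-finite-weights} really leaves $N-2$ inside the numerator, and that $\Var_{\mu_{N-1,j}}(a-b)$ is governed by $m$ for $j=0$ and by $n$ for $j=1$ independently of the parity of $N$.
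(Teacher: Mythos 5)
Your proposal is correct and follows exactly the paper's own route: chain \autoref{recursion-expected-value}, \autoref{covariance}, and \autoref{variance-finite-weights}, tracking the leftmost simple reflection of $w_{N,j}$ according to the parity of $N$. You merely make explicit the factorizations $\mu_{N,0}=D_1D_0\mu_{N-2,0}$ (resp. $D_0D_1\mu_{N-2,0}$) and the sign cancellation $-2\Cov_{D_1\mu}(b,a-b)=+2\Var_{D_1\mu}(a-b)$, which the paper's terse proof leaves implicit.
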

	
	\begin{proof}
		The equations follow directly by replacing the covariance in \autoref{recursion-expected-value} with the variance as computed in \autoref{covariance}.
		Subsequently, substitute this variance by its value as computed in \autoref{variance-finite-weights}.
		Depending on the parity of $N$ one has to keep track during this procedure of the leftmost simple reflection in the Weyl group element $w_{N,j}$ defining the measure $\mu_{N,j} = \mu_{w_{N,j}}$.
	\end{proof}
	
  \section{Main theorems and conclusions}
  
  \begin{thm}[Expected degree]
  	\label{main-theorem-expected-degree}
	Let $\Lambda = m \Lambda_0 + n \Lambda_1$ be a dominant integral weight and $N \geq 1$. Choose a basis of weight vectors in the Demazure modules $V_{w_{N,0}}(\Lambda)$ and $V_{w_{N,1}}(\Lambda)$. Then the expected degrees of a randomly chosen basis element are given by the following formulas, respectively.
		\begin{align}
			\label{gDxcUasK}
			\E{\mu_{N,0}}{a}
			&= \frac{2(N-1)m(m+2)+(N-1)(N-2)(m+n)(m+n+2)}{12(m+n+1)} \\
			& \quad + \left\lfloor \frac{N-1}2 \right\rfloor \frac{m+n}2 + \frac m2 , \notag
			\\
			\label{TkzSAF6r}
			\E{\mu_{N,1}}{a}
			&= \frac{2(N-1)n(n+2)+(N-1)(N-2)(m+n)(m+n+2)}{12(m+n+1)} \\
			& \quad + \left\lfloor \frac N2 \right\rfloor \frac{m+n}2 . \notag
		\end{align}
  \end{thm}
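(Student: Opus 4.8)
The plan is a short induction on $N$ that converts the two-step ``snake'' recursion assembled in Lemmas \ref{recursion-expected-value}, \ref{covariance}, \ref{variance-finite-weights} and \ref{recursion-formula} into a genuine one-step recurrence for $\E{\mu_{N,0}}{a}$ (resp.\ $\E{\mu_{N,1}}{a}$), by eliminating the $b$-coordinate with the help of \autoref{expected-value-finite-weight}. First I would settle the base case $N=1$: here $\mu_{1,0}=D_0\delta_\Lambda=\sum_{i=0}^m\delta_{\Lambda-i\alpha_0}$ and $\mu_{1,1}=D_1\delta_\Lambda=\sum_{i=0}^n\delta_{\Lambda-i\alpha_1}$, so $\E{\mu_{1,0}}{a}=\tfrac m2$ and $\E{\mu_{1,1}}{a}=0$, which match \eqref{gDxcUasK} and \eqref{TkzSAF6r} because there the first summand carries the factor $N-1=0$ and the floor terms vanish.

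For the inductive step I would first note that the leftmost simple reflection in $w_{N,0}$ is $s_0$ when $N$ is odd and $s_1$ when $N$ is even, so \autoref{expected-value-finite-weight} gives $\E{\mu_{N,0}}{a-b}=\tfrac m2$ for odd $N$ and $\E{\mu_{N,0}}{a-b}=-\tfrac n2$ for even $N$ (symmetrically, with $m\leftrightarrow n$, for $\mu_{N,1}$). Together with the quotient $\mu_{N-1,0}(\Gamma)/\mu_{N,0}(\Gamma)=(m+n+1)^{-1}$ for $N\ge 2$ coming from \autoref{total-mass}, the recurrences \eqref{recurrence-0-a} and \eqref{recurrence-0-b} collapse to the single relation
\[
  \E{\mu_{N,0}}{a}-\E{\mu_{N-1,0}}{a}
  = \frac{m(m+2)+(N-2)(m+n)(m+n+2)}{6(m+n+1)}
    + \begin{cases}\tfrac{m+n}{2}, & N \text{ odd},\\[0.3ex] 0, & N \text{ even},\end{cases}
\]
valid for $N\ge 2$. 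Indeed, for even $N$ the relation \eqref{recurrence-0-a} already has this form after cancelling the factor $m+n+1$; for odd $N$ one substitutes $\E{\mu_{N,0}}{a}=\E{\mu_{N,0}}{b}+\tfrac m2$ and $\E{\mu_{N-1,0}}{b}=\E{\mu_{N-1,0}}{a}+\tfrac n2$ (both from the values of $\E{\;\cdot\;}{a-b}$ above) to rewrite \eqref{recurrence-0-b} as a relation between the $a$-expectations, which produces the extra $\tfrac{m+n}{2}$. The same manipulation applied to $\mu_{N,1}$ via \eqref{recurrence-1-a} and \eqref{recurrence-1-b} yields the identical relation with $m(m+2)$ replaced by $n(n+2)$ and with the summand $\tfrac{m+n}{2}$ now occurring for \emph{even} $N$.

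Finally I would sum this telescoping relation from $2$ to $N$ (equivalently, induct on $N$). The first term contributes $\sum_{k=2}^{N}\tfrac{m(m+2)+(k-2)(m+n)(m+n+2)}{6(m+n+1)}$, which equals $\tfrac{2(N-1)m(m+2)+(N-1)(N-2)(m+n)(m+n+2)}{12(m+n+1)}$ using $\sum_{k=2}^{N}(k-2)=\tfrac{(N-1)(N-2)}{2}$; the $\tfrac{m+n}{2}$ terms contribute $\tfrac{m+n}{2}$ times the number of odd integers in $\{2,\dots,N\}$, which is $\lfloor(N-1)/2\rfloor$; adding the base value $\tfrac m2$ gives exactly \eqref{gDxcUasK}. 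For $\mu_{N,1}$ the $\tfrac{m+n}{2}$ terms instead count the even integers in $\{2,\dots,N\}$, of which there are $\lfloor N/2\rfloor$, and the base value is $0$, giving \eqref{TkzSAF6r}.

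The only genuine obstacle is the parity bookkeeping in the middle paragraph: for each parity of $N$ one must correctly identify which of \eqref{recurrence-0-a}/\eqref{recurrence-0-b} (resp.\ \eqref{recurrence-1-a}/\eqref{recurrence-1-b}) applies, which value $\E{\mu_{N,0}}{a-b}$ takes, and hence how to convert between the $a$- and $b$-expectations; and then one must check that the two resulting counts of odd (resp.\ even) integers in $\{2,\dots,N\}$ really do assemble into the floor functions $\lfloor(N-1)/2\rfloor$ and $\lfloor N/2\rfloor$ of the statement. Everything else is routine algebraic simplification.
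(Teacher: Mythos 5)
Your proposal is correct and takes essentially the same route as the paper: it combines the recurrences of \autoref{recursion-formula} with the finite-weight expectations of \autoref{expected-value-finite-weight}, merely collapsing the paper's alternating $a$/$b$ ``snake'' into a single one-step recurrence for $\E{\mu_{N,j}}{a}$ before telescoping. Your parity bookkeeping (the extra $\tfrac{m+n}{2}$ for odd $N$ in the $\mu_{N,0}$ case and for even $N$ in the $\mu_{N,1}$ case, counted by $\left\lfloor \frac{N-1}2 \right\rfloor$ and $\left\lfloor \frac N2 \right\rfloor$ respectively) is accurate and reproduces the floor terms of \eqref{gDxcUasK} and \eqref{TkzSAF6r}.
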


	\begin{proof}
We start by showing \eqref{gDxcUasK}.
	One directly sees that $\E{\mu_{1,0}}{a} = \frac m2$.
	By \autoref{total-mass} we have $\frac{\mu_{N-1,0}(\Gamma)}{\mu_{N,0}(\Gamma)} = \frac 1{m+n+1}$ for $N \geq 2$.
	Hence by \eqref{recurrence-0-a} and \eqref{recurrence-0-b} we have
	\begin{equation}
		\label{yCbFJpUQ}
		\E{\mu_{N,0}}{a} = \E{\mu_{N-1,0}}{a} + 
		\frac{m(m+2) + (N-2)(m+n)(m+n+2)}{6(m+n+1)}
	\end{equation}
	for even $N \geq 2$, and
	\begin{equation}
		\label{mrDQMQUr}
		\E{\mu_{N,0}}{b} = \E{\mu_{N-1,0}}{b} + 
		\frac{m(m+2) + (N-2)(m+n)(m+n+2)}{6(m+n+1)}
	\end{equation}
	for odd $N \geq 2$.
	By \autoref{expected-value-finite-weight} we have
	\begin{equation}
		\label{awEccYNr}
		\E{\mu_{N,0}}{b} = \E{\mu_{N,0}}{a} + \frac n2
	\end{equation}
	for even $N \geq 2$, and
	\begin{equation}
		\label{YYihEz9e}
		\E{\mu_{N,0}}{a} = \E{\mu_{N,0}}{b} + \frac m2
	\end{equation}
	for odd $N \geq 2$.
	In order to recursively compute $\E{\mu_{N,0}}{a}$ from $\E{\mu_{1,0}}{a} = \frac m2$ we must apply -- in this order -- \eqref{yCbFJpUQ}, \eqref{awEccYNr}, \eqref{mrDQMQUr} and \eqref{YYihEz9e} periodically to compute $\E{\mu_{2,0}}{a}$, $\E{\mu_{2,0}}{b}$, $\E{\mu_{3,0}}{b}$, $\E{\mu_{3,0}}{a}$ etc.
		The reader is referred to \autoref{snake-recursion-0} for an illustration of these recursion steps.
	The contributions from \eqref{yCbFJpUQ} and \eqref{mrDQMQUr} add up to
	\[
		\sum_{i=2}^N \frac{m(m+2) + (i-2)(m+n)(m+n+2)}{6(m+n+1)}
	\]
	which is the first summand of \eqref{gDxcUasK} by Gauss's summation formula.
	The contributions from \eqref{awEccYNr} and \eqref{YYihEz9e} add up to $\left\lfloor \frac{N-1}2 \right\rfloor \frac {n+m}2$, which is the second summand of \eqref{gDxcUasK}.
	The third summand is the initial value of the recursion, $\E{\mu_{1,0}}{a} = \frac m2$.
	
	The proof of \eqref{TkzSAF6r} is similar.
	Here we deduce
		\begin{equation}
		\label{8i4JMzK7}
		\E{\mu_{N,1}}{b} = \E{\mu_{N-1,1}}{b} + 
		\frac{n(n+2) + (N-2)(m+n)(m+n+2)}{6(m+n+1)}
	\end{equation}
	for even $N \geq 2$, and
	\begin{equation}
		\label{4vZuHCon}
		\E{\mu_{N,1}}{a} = \E{\mu_{N-1,1}}{a} + 
		\frac{n(n+2) + (N-2)(m+n)(m+n+2)}{6(m+n+1)}
	\end{equation}
	for odd $N \geq 2$ from \eqref{recurrence-1-b} and \eqref{recurrence-1-a}, respectively.
	From \autoref{expected-value-finite-weight} we get
  	\begin{equation}
		\label{WtuRm8EK}
		\E{\mu_{N,1}}{a} = \E{\mu_{N,1}}{b} + \frac m2
	\end{equation}
	for even $N \geq 2$ and
	\begin{equation}
		\label{o6sAv6ke}
		\E{\mu_{N,1}}{b} = \E{\mu_{N,1}}{a} + \frac n2
	\end{equation}
	for odd $N \geq 2$.
	Now, starting from $\E{\mu_{1,1}}{b} = \frac n2$ we recursively compute $\E{\mu_{2,1}}{b}$, $\E{\mu_{2,1}}{a}$, $\E{\mu_{3,1}}{a}$, $\E{\mu_{3,1}}{b}$ etc.\ by periodic application -- again in this order -- of \eqref{8i4JMzK7}, \eqref{WtuRm8EK}, \eqref{4vZuHCon}, and \eqref{o6sAv6ke}, as illustrated in \autoref{snake-recursion-1}.
	The first summand of \eqref{TkzSAF6r} collects the contributions of the applications of \eqref{8i4JMzK7} and \eqref{4vZuHCon}.
	The second summand collects both the initial value and the contributions of the applications of \eqref{WtuRm8EK} and \eqref{o6sAv6ke}.
	\end{proof}
	
	\begin{figure}
		\[ \begin{array}{cc|ccc}
			N & w_{N,0} & \E{\mu_{N,0}}{a} & & \E{\mu_{N,0}}{b} \\ \hline
			1 & s_0 & \frac m2 & & 0 \\
			  & & \downarrow\!\makebox[0pt][l]{\scriptsize\eqref{yCbFJpUQ}} & & \\
			2 & s_1s_0 & * & \overset{\eqref{awEccYNr}}{\longrightarrow} & * \\
			  & & & & \downarrow\!\makebox[0pt][l]{\scriptsize\eqref{mrDQMQUr}} \\
			3 & s_0s_1s_0 & * & \overset{\eqref{YYihEz9e}}{\longleftarrow} & * \\
			  & & \downarrow\!\makebox[0pt][l]{\scriptsize\eqref{yCbFJpUQ}} & & \\
			4 & s_1s_0s_1s_0& * & \overset{\eqref{awEccYNr}}{\longrightarrow} & * \\
			  & & & & \downarrow\!\makebox[0pt][l]{\scriptsize\eqref{mrDQMQUr}}
		\end{array} \]
		\caption{Recursion steps for $\E{\mu_{N,0}}{a}$.}
		\label{snake-recursion-0}
	\end{figure}

	\begin{figure}
		\[ \begin{array}{cc|ccc}
			N & w_{N,1} & \E{\mu_{N,1}}{a} & & \E{\mu_{N,1}}{b} \\ \hline
			1 & s_1 & 0 & & \frac n2 \\
			  & & & & \downarrow\!\makebox[0pt][l]{\scriptsize\eqref{8i4JMzK7}} \\
			2 & s_0s_1 & * & \overset{\eqref{WtuRm8EK}}{\longleftarrow} & * \\
			  & & \downarrow\!\makebox[0pt][l]{\scriptsize\eqref{4vZuHCon}} & & \\
			3 & s_1s_0s_1 & * & \overset{\eqref{o6sAv6ke}}{\longrightarrow} & * \\
			  & & & & \downarrow\!\makebox[0pt][l]{\scriptsize\eqref{8i4JMzK7}} \\
			4 & s_0s_1s_0s_1& * & \overset{\eqref{WtuRm8EK}}{\longleftarrow} & * \\
			  & & \downarrow\!\makebox[0pt][l]{\scriptsize\eqref{4vZuHCon}} & &
		\end{array} \]
		\caption{Recursion steps for $\E{\mu_{N,1}}{a}$.}
		\label{snake-recursion-1}
	\end{figure}

	We compute the maximal occurring degree in a given Demazure module for comparison with the expected degree.
	
	\begin{lem}
		\label{maximal-degree}
		Let $\Lambda = m\Lambda_0 + n\Lambda_1$ be a dominant integral weight and $N \geq 0$. The highest degree of a weight is
		\begin{align*}
			A^{m,n}_{N,0}
			&= \left\lceil \frac N2 \right\rceil m + (m+n) \left( \left\lceil \frac N2 \right\rceil -1 \right) \left\lceil \frac N2 \right\rceil
			&&\text{ in $V_{w_{N,0}}(\Lambda)$, and} \\
			A^{m,n}_{N,1}			
			&= \left\lfloor \frac N2 \right\rfloor (m+2n) + (m+n) \left( \left\lfloor \frac N2 \right\rfloor -1 \right) \left\lfloor \frac N2 \right\rfloor
			&&\text{ in $V_{w_{N,1}}(\Lambda)$}.
		\end{align*}
	\end{lem}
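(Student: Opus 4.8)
The plan is to reduce the claim to computing the degree of a single weight, namely the extremal weight \(w_{N,j}\Lambda\). Since the degree of \(\lambda = \Lambda - a(\lambda)\alpha_0 - b(\lambda)\alpha_1\) is \(a(\lambda)\) --- the normalization already in force in \autoref{main-theorem-expected-degree} --- the highest degree occurring in \(V_w(\Lambda)\) is \(\max\{\,a(\lambda) : \lambda \in \operatorname{supp}\mu_w\,\}\), and \(\operatorname{supp}\mu_w\) is the set of weights of \(V_w(\Lambda)\) by Demazure's character formula. First I would show that this maximum equals \(a(w\Lambda)\) for every \(w \in W^{\mathrm{aff}}\). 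For the inequality ``\(\le\)'': by definition \(V_w(\Lambda) = U(\hat{\mathfrak b}) v_{w\Lambda}\), where \(v_{w\Lambda}\) spans the one-dimensional extremal weight space \(V(\Lambda)_{w\Lambda}\) of the integrable highest weight module \(V(\Lambda)\); acting by \(\hat{\mathfrak b}\) can only shift the weight by a non-negative integer combination of positive roots, and every positive root of \(\widehat{\mathfrak{sl}}_2\) lies in \(\mathbf Z_{\ge 0}\alpha_0 + \mathbf Z_{\ge 0}\alpha_1\), so every weight \(\lambda\) of \(V_w(\Lambda)\) satisfies \(\lambda - w\Lambda \in \mathbf Z_{\ge 0}\alpha_0 + \mathbf Z_{\ge 0}\alpha_1\), whence \(a(\lambda) \le a(w\Lambda)\). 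For ``\(\ge\)'': \(w\Lambda\) is itself a weight of \(V_w(\Lambda)\). Hence the highest degree in \(V_w(\Lambda)\) is \(a(w\Lambda)\).

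It remains to evaluate \(a(w_{N,j}\Lambda)\). By \eqref{0-height} and \eqref{1-height} the simple reflections act on the coordinates by
\[
  s_0 \colon (a,b) \longmapsto (m - a + 2b,\ b), \qquad s_1 \colon (a,b) \longmapsto (a,\ n + 2a - b).
\]
Directly from the definition \(w_{N,0} = s_0^{(N \bmod 2)}(s_1 s_0)^{\lfloor N/2 \rfloor}\) one reads off that \(w_{N,0} = s_0 w_{N-1,0}\) for odd \(N\) and \(w_{N,0} = s_1 w_{N-1,0}\) for even \(N\); applying the two maps above alternately, starting from \((a,b)(\Lambda) = (0,0)\), shows that the pair \(\bigl(a(w_{N,0}\Lambda),\, b(w_{N,0}\Lambda)\bigr)\) satisfies a two-term recurrence which a short induction resolves: for \(N \ge 1\),
\[
  a(w_{N,0}\Lambda) = \Bigl\lceil\tfrac N2\Bigr\rceil^{2} m + \Bigl\lceil\tfrac N2\Bigr\rceil\Bigl(\Bigl\lceil\tfrac N2\Bigr\rceil - 1\Bigr) n, \qquad b(w_{N,0}\Lambda) = \Bigl\lfloor\tfrac N2\Bigr\rfloor\Bigl(\Bigl\lfloor\tfrac N2\Bigr\rfloor + 1\Bigr) m + \Bigl\lfloor\tfrac N2\Bigr\rfloor^{2} n,
\]
and the first expression is \(A^{m,n}_{N,0}\) once the coefficient of \(m + n\) is collected. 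For \(w_{N,1}\) the analogous recurrence (now begun with \(s_1\)) works the same way; alternatively, the diagram automorphism of \(\widehat{\mathfrak{sl}}_2\) interchanges \(s_0 \leftrightarrow s_1\) --- hence \(w_{N,0} \leftrightarrow w_{N,1}\) and \(a \leftrightarrow b\) --- together with \(m \leftrightarrow n\), so \(a(w_{N,1}\Lambda)\) for the weight \(m\Lambda_0 + n\Lambda_1\) equals \(b(w_{N,0}\Lambda)\) for the weight \(n\Lambda_0 + m\Lambda_1\), which is \(\lfloor N/2\rfloor^{2} m + \lfloor N/2\rfloor(\lfloor N/2\rfloor + 1) n = A^{m,n}_{N,1}\).

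I do not expect a genuine obstacle: the sole representation-theoretic ingredient is the defining property \(V_w(\Lambda) = U(\hat{\mathfrak b}) v_{w\Lambda}\), and the rest is bookkeeping. The two points that need care are that the degree is normalized to be the \(a\)-coordinate (matching \autoref{main-theorem-expected-degree} rather than its negative), and the parity analysis of the recurrence --- deciding for exactly which \(N\) the \(a\)-coordinate of \(w_{N,0}\Lambda\) jumps and for which it stays constant, which is precisely what produces the ceilings appearing for \(V_{w_{N,0}}(\Lambda)\) and the floors appearing for \(V_{w_{N,1}}(\Lambda)\), and which must be matched against the closed forms above. If one prefers to stay within the measure-theoretic language of the preceding sections, one can instead prove \(\max\{a(\lambda)\} = a(w_{N,j}\Lambda)\) by tracking how \(D_0\) and \(D_1\) move the upper boundary of \(\operatorname{supp}\mu_{N,j}\); this is feasible but heavier, as one then follows an entire piecewise-linear staircase rather than a single vertex.
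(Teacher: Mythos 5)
Your proposal is correct and follows essentially the same route as the paper: identify the highest degree with $a(w_{N,j}\Lambda)$ and compute the coordinates of the extremal weight by induction on $N$ via the action of $s_0$ and $s_1$ (your closed forms $\lceil N/2\rceil^2 m + \lceil N/2\rceil(\lceil N/2\rceil-1)n$ and $\lfloor N/2\rfloor^2 m + \lfloor N/2\rfloor(\lfloor N/2\rfloor+1)n$ agree with $A^{m,n}_{N,0}$ and $A^{m,n}_{N,1}$). The only differences are cosmetic: you make explicit, via $V_w(\Lambda)=U(\hat{\mathfrak b})v_{w\Lambda}$, why the maximum is attained at the extremal weight (left implicit in the paper), and you handle the $j=1$ case by the diagram automorphism rather than rerunning the induction starting from $s_1\Lambda$.
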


	\begin{proof}
		By an easy induction on even $N$, one proves that the coefficients of $w_{N,0}\Lambda = \Lambda -x \alpha_0 - y \alpha_1$ satisfy $x = \sum_{i=0}^{N/2-1} (m + 2i(m+n))$ and $x-y = - \frac N2 (m+n)$.
		As the operator $D_1$ preserves the degree we have $A^{m,n}_{N-1,0} = A^{m,n}_{N,0}$.
		Hence replacing $\frac N2$ with $\left\lceil \frac N2 \right\rceil$ extends the formula to all $N$, thereby implying the lemma in the case of $V_{w_{N,0}}(\Lambda)$.	
		Note that the proof in the case of $V_{w_{N,1}}(\Lambda)$ is completely analogous if one starts with $\Lambda' = s_1 \Lambda = \Lambda -n\alpha_1$ instead. This first step introduces the $2n$ and switches the $\lceil \frac N2 \rceil$ to $\lfloor \frac N2 \rfloor$ in the formula claimed above.
	\end{proof}

	From \autoref{main-theorem-expected-degree} we can immediately derive asymptotic statements when the parameters $N,m$ or $n$ become large. Let us first treat the case when the length of the Weyl group element, that is $N$, tends to infinity.
	 
	\begin{cor}
		\label{asymptotics-N}
		Let $\Lambda = m \Lambda_0 + n \Lambda_1$ be a dominant integral weight and $j \in \{ 0,1 \}$. Then the limit ratio of the expected and maximal degree in $V_{w_{N,j}}(\Lambda)$, as $N$ tends to infinity, is given by
		\[
			\lim_{N \rightarrow \infty} \frac{\E{\mu_{N,j}}{a}}{A^{m,n}_{N,j}}
			=
			 \frac{m+n+2}{3(m+n+1)}.
		\]
	\end{cor}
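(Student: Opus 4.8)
The plan is to recognize that both $\E{\mu_{N,j}}{a}$ and $A^{m,n}_{N,j}$ are, up to terms of lower order, quadratic polynomials in $N$, so that the limit ratio equals the ratio of their leading coefficients. The computation is short once this observation is in place.

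First I would extract the leading term of the expected degree from \autoref{main-theorem-expected-degree}. In both \eqref{gDxcUasK} and \eqref{TkzSAF6r} the only summand that is quadratic in $N$ is $\frac{(N-1)(N-2)(m+n)(m+n+2)}{12(m+n+1)}$; the term $2(N-1)m(m+2)$ (resp.\ $2(N-1)n(n+2)$) is linear in $N$, and the floor terms $\lfloor \frac{N-1}{2}\rfloor\frac{m+n}{2}$, $\lfloor\frac N2\rfloor\frac{m+n}{2}$ together with $\frac m2$ are at most linear. Hence, for $j \in \{0,1\}$,
\[
\E{\mu_{N,j}}{a} = \frac{(m+n)(m+n+2)}{12(m+n+1)}\,N^2 + O(N).
\]

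Next I would do the same for the maximal degree using \autoref{maximal-degree}. Since $\lceil\frac N2\rceil = \frac N2 + O(1)$ and $\lfloor\frac N2\rfloor = \frac N2 + O(1)$, the dominant summand of $A^{m,n}_{N,0}$ is $(m+n)\lceil\frac N2\rceil^2$, and that of $A^{m,n}_{N,1}$ is $(m+n)\lfloor\frac N2\rfloor^2$; in each case the remaining contributions are linear in $N$. Therefore, for $j \in \{0,1\}$,
\[
A^{m,n}_{N,j} = \frac{m+n}{4}\,N^2 + O(N).
\]
Dividing numerator and denominator by $N^2$ and letting $N \to \infty$ then yields, for $\Lambda \neq 0$ (so that $m+n \geq 1$ and the denominator is eventually positive),
\[
\lim_{N\to\infty}\frac{\E{\mu_{N,j}}{a}}{A^{m,n}_{N,j}}
= \frac{(m+n)(m+n+2)\big/\bigl(12(m+n+1)\bigr)}{(m+n)\big/4}
= \frac{m+n+2}{3(m+n+1)}.
\]

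The only point requiring care — and the main, rather mild, obstacle — is the bookkeeping of which terms are genuinely quadratic in $N$ and which are merely linear, in particular because the floor and ceiling functions obscure the leading coefficients slightly. If one prefers to avoid $O$-notation entirely, one can instead fix the parity of $N$, so that $\lceil\frac N2\rceil$ and $\lfloor\frac N2\rfloor$ become honest linear functions of $N$, compute the two resulting subsequential limits along even and odd $N$, and observe that they coincide with the claimed value.
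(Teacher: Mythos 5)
Your proposal is correct and is essentially the paper's own argument: the corollary is stated there as an immediate consequence of \autoref{main-theorem-expected-degree} and \autoref{maximal-degree}, obtained exactly as you do by comparing the leading quadratic coefficients $\frac{(m+n)(m+n+2)}{12(m+n+1)}$ and $\frac{m+n}{4}$ in $N$. Your remark that one should assume $\Lambda \neq 0$ (so $m+n \geq 1$) is a sensible explicit note of a hypothesis the paper leaves tacit.
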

	
	Similar asymptotic statements hold with respect to the coefficients of the fundamental weights $\Lambda_0, \Lambda_1$, i.e. $m$ and $n$, respectively.
	
	\begin{cor}
		\label{asymptotics-m-n}
		Let $N \geq 1$ and $j \in \{ 0,1 \}$. Then the limit ratios of the expected and maximal degree in $V_{w_{N,j}}(m\Lambda_0 + n\Lambda_1)$, as $m$ or $n$ tend to infinity, are given by
		\[
			\frac{\E{\mu_{N,0}}{a}}{A^{m,n}_{N,0}} \to
			\begin{cases}
				\frac{N^2 - N + 6 \left\lceil \frac N2 \right\rceil}{12 \left\lceil \frac N2 \right\rceil^2}
			 	& (m \to \infty,\ n \text{ fixed}) \\
				\frac{N^2 - 3N - 4 + 6 \left\lceil \frac N2 \right\rceil}{12 \left\lceil \frac N2 \right\rceil \left( \left\lceil \frac N2 \right\rceil -1 \right)}
				& (n \to \infty,\ m \text{ fixed}),
			\end{cases}
		\]
		and
		\[
			\frac{\E{\mu_{N,1}}{a}}{A^{m,n}_{N,1}} \to
			\begin{cases}
				\frac{N^2 - 3N +2 + 6 \left\lfloor \frac N2 \right\rfloor}{12 \left\lfloor \frac N2 \right\rfloor^2}
				& (m \to \infty,\ n \text{ fixed}) \\
				\frac{N^2 - N + 6 \left\lfloor \frac N2 \right\rfloor}{12 \left\lfloor \frac N2 \right\rfloor \left( \left\lfloor \frac N2 \right\rfloor +1 \right)}
				& (n \to \infty,\ m \text{ fixed}).
			\end{cases}
		\]
	\end{cor}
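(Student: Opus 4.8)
The plan is to read off all four limits directly from the closed formulas already in hand. For a fixed length $N$, \autoref{main-theorem-expected-degree} exhibits $\E{\mu_{N,j}}{a}$ as a rational function in $m$ and $n$ whose only denominator is the linear factor $m+n+1$, while \autoref{maximal-degree} exhibits $A^{m,n}_{N,j}$ as a polynomial in $m$ and $n$. In each of the four regimes exactly one of $m,n$ tends to infinity while the other is held fixed; regarding numerator and denominator as functions of that single variable $t$, both are of the form $ct+O(1)$ with $c\neq 0$, so $\lim \E{\mu_{N,j}}{a}/A^{m,n}_{N,j}$ is the quotient of the respective coefficients of $t$.

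I would carry out the case $j=0$, $m\to\infty$, $n$ fixed, in full and indicate that the rest are identical. In the first summand of \eqref{gDxcUasK} the numerator is a quadratic in $m$ with leading coefficient $2(N-1)+(N-1)(N-2)=N(N-1)$ and the denominator $12(m+n+1)$ is linear with leading coefficient $12$, so that summand equals $\frac{N(N-1)}{12}\,m+O(1)$; the remaining two summands $\lfloor(N-1)/2\rfloor\frac{m+n}{2}+\frac m2$ contribute $\bigl(\tfrac12\lfloor(N-1)/2\rfloor+\tfrac12\bigr)m+O(1)=\tfrac12\lceil N/2\rceil\,m+O(1)$, using the identity $\lfloor(N-1)/2\rfloor=\lceil N/2\rceil-1$. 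On the other side, $A^{m,n}_{N,0}=\lceil N/2\rceil m+(m+n)(\lceil N/2\rceil-1)\lceil N/2\rceil$ has coefficient of $m$ equal to $\lceil N/2\rceil+(\lceil N/2\rceil-1)\lceil N/2\rceil=\lceil N/2\rceil^2$. Dividing and clearing the common factor $12$ gives $\frac{N^2-N+6\lceil N/2\rceil}{12\lceil N/2\rceil^2}$, as claimed.

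The other three cases run along the same lines, with only a change of bookkeeping. When $n\to\infty$ with $m$ fixed, the factor $m(m+2)$ (for $j=0$) inside the first summand becomes a constant, so the leading coefficient of that summand drops to $(N-1)(N-2)$, whereas for $j=1$ the corresponding factor is $n(n+2)$ and the leading coefficient stays $N(N-1)$; for $j=1$ one uses \eqref{TkzSAF6r} together with the formula for $A^{m,n}_{N,1}$, whose coefficient of the large variable is $\lfloor N/2\rfloor^2$ as $m\to\infty$ and $\lfloor N/2\rfloor(\lfloor N/2\rfloor+1)$ as $n\to\infty$, the $2n$ in the summand $\lfloor N/2\rfloor(m+2n)$ accounting for the extra factor. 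There is no genuine obstacle here; the only point requiring care -- and the nearest thing to one -- is matching the floor/ceiling combinations so that they coincide with the stated expressions, which is settled by $\lfloor(N-1)/2\rfloor=\lceil N/2\rceil-1$ together with keeping track of whether the leading term of $A^{m,n}_{N,j}$ has the shape $\lceil N/2\rceil^2$, $(\lceil N/2\rceil-1)\lceil N/2\rceil$, $\lfloor N/2\rfloor^2$, or $\lfloor N/2\rfloor(\lfloor N/2\rfloor+1)$.
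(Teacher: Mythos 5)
Your proposal is correct and follows exactly the route the paper intends (it states this corollary without proof as an immediate consequence of Theorem \ref{main-theorem-expected-degree} and Lemma \ref{maximal-degree}): extract the coefficient of the growing variable from the expected-degree formula and from $A^{m,n}_{N,j}$ and take the quotient, and your leading-coefficient computations and the bookkeeping identity $\lfloor (N-1)/2\rfloor=\lceil N/2\rceil-1$ all check out. The only caveat, inherited from the corollary's own statement rather than introduced by you, is that for very small $N$ (e.g.\ $N\le 2$ with $j=0$, $n\to\infty$) the asserted nonvanishing of the leading coefficients fails and the displayed limit expression degenerates to $0/0$.
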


	
	It remains to extend \autoref{main-theorem-expected-degree} to the case of Demazure modules indexed by elements in the extended affine Weyl group $\widetilde{W}^\mathrm{aff} = \Sigma \ltimes W^\mathrm{aff}$. $\Sigma$ is the automorphism group of the Dynkin diagram of \(\widehat{\mathfrak{sl}}_2\). Note that $\Sigma = \langle \sigma \rangle$ and $\sigma^2 =1$ in the case of \(\widehat{\mathfrak{sl}}_2\). The element $\sigma$ maps $\Lambda=m\Lambda_0 + n\Lambda_1$ to $\sigma(\Lambda) = n\Lambda_0 + m \Lambda_1$ and the lattice $\Gamma$ to $\sigma(\Gamma) = \sigma(\Lambda) + \mathbb{Z} \alpha_0 + \mathbb{Z} \alpha_1$ via $\sigma(\Lambda + x \alpha_0 + y \alpha_1)=\sigma(\Lambda) + y \alpha_0 + x \alpha_1$. The Demazure operators $D_w$ with $w \in W^\mathrm{aff}$ are extended to Demazure operators indexed by elements in $\widetilde{W}^\mathrm{aff}$ by $D_\sigma e^\lambda = e^{\sigma(\lambda)}$ and $D_\sigma \delta_\lambda = \delta_{\sigma(\lambda)}$, respectively.

  	\begin{thm}
		\label{theorem-expected-degree-sigma}
		Let \(\Lambda = m \Lambda_0 + n \Lambda_1\) be a dominant integral weight and $N \geq 1$. Consider the extended affine Weyl group $\widetilde{W}^{\mathrm{aff}}$ and its elements $\sigma s_0$ and $\sigma s_1$, where $\sigma$ is the non-trivial automorphism of the Dynkin diagram of \(\widehat{\mathfrak{sl}}_2\). Choose a basis of weight vectors in the Demazure modules \(V_{(\sigma s_0)^N}(\Lambda)\) and $V_{(\sigma s_1)^N}(\Lambda)$. Then the expected degrees of a randomly chosen basis element are given by the following formulas, respectively.
		\begin{align*}
			\E{\mu_{(\sigma s_0)^N}}{a}
			&= \frac{2(N-1)m(m+2)+(N-1)(N-2)(m+n)(m+n+2)}{12(m+n+1)} \\
			& \quad + 	\left\lfloor \frac {N-1}2 \right\rfloor \frac n2
			+ \left\lfloor \frac N2 \right\rfloor \frac m2 , \\
			\E{\mu_{(\sigma s_1)^N}}{a}
			&= \frac{2(N-1)n(n+2)+(N-1)(N-2)(m+n)(m+n+2)}{12(m+n+1)} \\
			& \quad + 	\left\lceil \frac {N-1}2 \right\rceil \frac m2
			+ \left\lceil \frac N2 \right\rceil \frac n2 .
		\end{align*}
	\end{thm}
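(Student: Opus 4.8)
The plan is to reduce both assertions to \autoref{main-theorem-expected-degree}, using only that $\sigma$ is a diagram automorphism, so $\sigma s_j \sigma^{-1} = s_{1-j}$ in $\widetilde{W}^{\mathrm{aff}}$ and $\sigma^2 = 1$. First I would simplify the powers $(\sigma s_j)^N$. A direct computation gives $(\sigma s_0)^2 = s_1 s_0$, hence $(\sigma s_0)^{2k} = (s_1 s_0)^k = w_{2k,0}$ and $(\sigma s_0)^{2k+1} = \sigma s_0 (s_1 s_0)^k = \sigma\, w_{2k+1,0}$; symmetrically $(\sigma s_1)^{2k} = w_{2k,1}$ and $(\sigma s_1)^{2k+1} = \sigma\, w_{2k+1,1}$, with the $W^{\mathrm{aff}}$-parts written in their standard reduced decompositions. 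Since $\sigma$ has length $0$, the extended Demazure character formula then yields $\mu_{(\sigma s_j)^N} = \mu_{N,j}$ for even $N$ and $\mu_{(\sigma s_j)^N} = D_\sigma \mu_{N,j} = \sigma_*\mu_{N,j}$ for odd $N$, where $\sigma_*$ denotes pushforward along the affine map $\sigma(\Lambda + x\alpha_0 + y\alpha_1) = \sigma(\Lambda) + y\alpha_0 + x\alpha_1$.

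For even $N$ there is then nothing to prove: $\E{\mu_{(\sigma s_0)^N}}{a} = \E{\mu_{N,0}}{a}$ and $\E{\mu_{(\sigma s_1)^N}}{a} = \E{\mu_{N,1}}{a}$ are given verbatim by \eqref{gDxcUasK} and \eqref{TkzSAF6r}. For odd $N$ the point is that $\sigma$ interchanges the two coordinates, $a(\sigma\lambda) = b(\lambda)$, hence $\E{\sigma_*\mu}{a} = \E{\mu}{b}$ for every measure $\mu$; thus $\E{\mu_{(\sigma s_0)^N}}{a} = \E{\mu_{N,0}}{b}$ and $\E{\mu_{(\sigma s_1)^N}}{a} = \E{\mu_{N,1}}{b}$. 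To bring these into the range of \autoref{main-theorem-expected-degree} I would apply \autoref{expected-value-finite-weight} to the leftmost simple reflection of $w_{N,j}$, which for odd $N$ is $s_0$ in $w_{N,0}$ and $s_1$ in $w_{N,1}$: this gives $\E{\mu_{N,0}}{b} = \E{\mu_{N,0}}{a} - \frac m2$ and $\E{\mu_{N,1}}{b} = \E{\mu_{N,1}}{a} + \frac n2$, which are exactly the identities \eqref{YYihEz9e} and \eqref{o6sAv6ke} used in the proof of \autoref{main-theorem-expected-degree} (and which also hold at $N = 1$, since $\mu_{1,0} = D_0\delta_\Lambda$ and $\mu_{1,1} = D_1\delta_\Lambda$ can be read off directly).

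Finally I would substitute \eqref{gDxcUasK} and \eqref{TkzSAF6r} into these four cases and simplify. In every case the rational leading term is left unchanged, while the remaining terms regroup into the asserted floor/ceiling expressions via elementary identities such as $\lfloor \frac N2 \rfloor = \lfloor \frac{N-1}2 \rfloor + 1$ for even $N$ and $\lfloor \frac{N-1}2 \rfloor = \lfloor \frac N2 \rfloor$ for odd $N$ (and the corresponding ceiling statements for the $(\sigma s_1)^N$ formula): these split the term $\lfloor \frac{N-1}2 \rfloor \frac{m+n}2$, together with the leftover $\frac m2$ (resp.\ $\frac n2$) coming from \autoref{expected-value-finite-weight}, precisely so as to yield $\lfloor \frac{N-1}2 \rfloor \frac n2 + \lfloor \frac N2 \rfloor \frac m2$, respectively $\lceil \frac{N-1}2 \rceil \frac m2 + \lceil \frac N2 \rceil \frac n2$. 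The main obstacle --- really the only one --- is bookkeeping: keeping the four parity-by-index cases and the floor/ceiling conventions consistently matched. There is no new analytic content; everything rests on \autoref{main-theorem-expected-degree}, \autoref{expected-value-finite-weight}, and the elementary description of $\sigma$ recalled just above.
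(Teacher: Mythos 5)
Your argument is correct, and it reduces the theorem to \autoref{main-theorem-expected-degree} by the same basic decomposition $(\sigma s_j)^N=\sigma^{(N\bmod 2)}w_{N,j}$ that the paper uses; the difference lies in how the odd-$N$ case is finished. The paper commutes $\sigma$ to the right, $\sigma w_{N,0}=w_{N,1}\sigma$, identifies $V_{w_{N,1}\sigma}(\Lambda)=V_{w_{N,1}}(\sigma(\Lambda))$, and then simply quotes the \emph{other} formula of \autoref{main-theorem-expected-degree} with the swapped highest weight $\sigma(\Lambda)=n\Lambda_0+m\Lambda_1$ (so the coordinate $a$ is taken on the lattice $\Gamma'=\sigma(\Lambda)+\mathbb{Z}\alpha_0+\mathbb{Z}\alpha_1$ and no further computation is needed). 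You instead keep $\sigma$ on the left as the final operator, write $\mu_{(\sigma s_j)^N}=\sigma_*\mu_{N,j}$ with the original $\Lambda$, use $a\circ\sigma=b$ to convert to $\E{\mu_{N,j}}{b}$, and then trade $b$ for $a$ via \autoref{expected-value-finite-weight} (your identities \eqref{YYihEz9e}, \eqref{o6sAv6ke}, with the $N=1$ case checked separately, as needed since those are stated for $N\ge 2$), finishing with \eqref{gDxcUasK} resp.\ \eqref{TkzSAF6r} for the \emph{same} index $j$. Both routes are equally elementary and the floor/ceiling bookkeeping checks out in all four parity cases; yours buys staying on the fixed lattice $\Gamma$ with the original highest weight at the cost of one extra appeal to \autoref{expected-value-finite-weight}, while the paper's buys an immediate answer at the cost of re-basing the coordinates at $\sigma(\Lambda)$ and swapping $m\leftrightarrow n$.
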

	
	\begin{proof}
		Let us first treat \(V_{(\sigma s_0)^N}(\Lambda)\). We have $(\sigma s_0)^N = \sigma^{(N \bmod 2)}w_{N,0}$. If $N$ is even, the expected degree is given by $\E{\mu_{N,0}}{a}$, and a slight modification of \eqref{gDxcUasK} in \autoref{main-theorem-expected-degree} immediately proves the claim. If $N$ is odd, note that $\sigma s_0 = s_1 \sigma$ implies $\sigma w_{N,0} = w_{N,1} \sigma$. Since $V_{w_{N,1}\sigma}(\Lambda) = V_{w_{N,1}}(\sigma(\Lambda))$ we now have to consider the weight distribution $\mu_{N,1}$ of  $V_{w_{N,1}}(\sigma(\Lambda))$ which is supported on the lattice $\Gamma' = \sigma(\Lambda) +\mathbb{Z} \alpha_0 + \mathbb{Z} \alpha_1$. Therefore the claimed formula follows from the second part, that is \eqref{TkzSAF6r}, of \autoref{main-theorem-expected-degree}, now applied with the highest weight $\sigma(\Lambda) = n\Lambda_0 + m\Lambda_1$. For the Demazure module $V_{(\sigma s_1)^N}(\Lambda)$ the situation is completely analogous. Here one notes that $(\sigma s_1)^N = \sigma^{(N \bmod 2)}w_{N,1}$ and $\sigma w_{N,1} = w_{N,0} \sigma$. The interesting case is again for $N$ odd. Now one has to consider the weight distribution $\mu_{N,0}$ of  $V_{w_{N,0}}(\sigma(\Lambda))$ to compute the expected degree of a randomly chosen basis weight vector in  $V_{(\sigma s_1)^N}(\Lambda)$.
	\end{proof}
	
	From \autoref{theorem-expected-degree-sigma} we can derive similar asymptotic statements as before. First let us compute the maximal occurring degree.
	
	\begin{lem}
		\label{maximal-degree-sigma}
		Let $\Lambda = m\Lambda_0 + n\Lambda_1$ be a dominant integral weight and $N \geq 0$. Consider the Demazure modules  \(V_{(\sigma s_0)^N}(\Lambda)\) and $V_{(\sigma s_1)^N}(\Lambda)$. The highest degree of a weight is
		\begin{align*}
			B^{m,n}_{N,0}
			&= \left\lfloor \frac N2 \right\rfloor m + (m+n) \left\lfloor \frac{N-1}2 \right\rfloor \left\lfloor \frac N2 \right\rfloor
			&&\text{ in $V_{(\sigma s_0)^N}(\Lambda)$, and} \\
			B^{m,n}_{N,1}			
			&= \left\lceil \frac N2 \right\rceil n + (m+n) \left\lfloor \frac N2 \right\rfloor \left\lceil \frac N2\right\rceil
			&&\text{ in $V_{(\sigma s_1)^N}(\Lambda)$}.
		\end{align*}
	\end{lem}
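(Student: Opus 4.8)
The strategy is to reduce both formulas to \autoref{maximal-degree}, exactly along the lines of the proof of \autoref{theorem-expected-degree-sigma}. Recall from there that $(\sigma s_0)^N = \sigma^{(N \bmod 2)} w_{N,0}$ and $(\sigma s_1)^N = \sigma^{(N \bmod 2)} w_{N,1}$, and that for odd $N$ one has $\sigma w_{N,0} = w_{N,1}\sigma$ and $\sigma w_{N,1} = w_{N,0}\sigma$. Each of the two claims therefore splits into an even case, where $(\sigma s_j)^N$ is an honest affine Weyl group element, and an odd case, where the diagram automorphism moves the highest weight to $\sigma(\Lambda) = n\Lambda_0 + m\Lambda_1$ and shifts the supporting lattice to $\Gamma' = \sigma(\Lambda) + \mathbb{Z}\alpha_0 + \mathbb{Z}\alpha_1$, the ``degree'' then being the $a$-coordinate on $\Gamma'$, just as in \autoref{theorem-expected-degree-sigma}.

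For $V_{(\sigma s_0)^N}(\Lambda)$ with $N$ even, $(\sigma s_0)^N = w_{N,0}$, so by \autoref{maximal-degree} the highest degree is $A^{m,n}_{N,0}$; since $\lceil N/2 \rceil = \lfloor N/2 \rfloor$ and $\lceil N/2 \rceil - 1 = \lfloor (N-1)/2 \rfloor$ for even $N$, this already reads $B^{m,n}_{N,0}$. For $N$ odd, $(\sigma s_0)^N = w_{N,1}\sigma$, so $V_{(\sigma s_0)^N}(\Lambda) = V_{w_{N,1}}(\sigma(\Lambda))$ and \autoref{maximal-degree}, applied with $m$ and $n$ interchanged, gives highest degree $A^{n,m}_{N,1} = \lfloor N/2 \rfloor (n + 2m) + (m+n)(\lfloor N/2 \rfloor - 1)\lfloor N/2 \rfloor$. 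Expanding and using $\lfloor (N-1)/2 \rfloor = \lfloor N/2 \rfloor$ for odd $N$ collapses this to $\lfloor N/2 \rfloor m + (m+n)\lfloor (N-1)/2 \rfloor \lfloor N/2 \rfloor = B^{m,n}_{N,0}$.

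The case of $V_{(\sigma s_1)^N}(\Lambda)$ is symmetric. For $N$ even it equals $V_{w_{N,1}}(\Lambda)$ with highest degree $A^{m,n}_{N,1} = \lfloor N/2 \rfloor(m+2n) + (m+n)(\lfloor N/2 \rfloor - 1)\lfloor N/2 \rfloor$, which rearranges (again via $\lceil N/2 \rceil = \lfloor N/2 \rfloor$) to $\lceil N/2 \rceil n + (m+n)\lfloor N/2 \rfloor \lceil N/2 \rceil = B^{m,n}_{N,1}$. For $N$ odd it equals $V_{w_{N,0}}(\sigma(\Lambda))$, whose highest degree is $A^{n,m}_{N,0} = \lceil N/2 \rceil n + (m+n)(\lceil N/2 \rceil - 1)\lceil N/2 \rceil$ by \autoref{maximal-degree} with $m,n$ swapped; since $\lceil N/2 \rceil - 1 = \lfloor N/2 \rfloor$ for odd $N$, this is precisely $B^{m,n}_{N,1}$.

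The only delicate point --- and thus the main obstacle --- is the bookkeeping of the degree statistic across the diagram automorphism $\sigma$: one must be certain that, for odd $N$, the highest degree of $V_{(\sigma s_j)^N}(\Lambda)$ is genuinely the maximal $a$-coordinate in the lattice $\Gamma'$ attached to $\sigma(\Lambda)$. This is the identification already invoked in \autoref{theorem-expected-degree-sigma}; concretely $D_\sigma$ swaps the $a$- and $b$-coordinates while carrying $\Gamma$ onto $\sigma(\Gamma)$, so the degree statistic of $w_{N,1}\sigma$ (resp.\ $w_{N,0}\sigma$) on $\Gamma$ turns into the $a$-statistic of $w_{N,1}$ (resp.\ $w_{N,0}$) on $\Gamma'$. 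Granting this, everything is a consequence of \autoref{maximal-degree} together with the elementary floor and ceiling identities above, which I would not spell out in detail.
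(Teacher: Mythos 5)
Your proposal is correct and follows essentially the same route as the paper: reduce to \autoref{maximal-degree} via $(\sigma s_j)^N = \sigma^{(N\bmod 2)}w_{N,j}$ and $\sigma w_{N,j} = w_{N,1-j}\sigma$, apply the lemma with $\sigma(\Lambda) = n\Lambda_0 + m\Lambda_1$ in the odd case, and unify the even/odd formulas by the floor--ceiling identities. The paper only writes out the $V_{(\sigma s_0)^N}(\Lambda)$ case and leaves the other to the reader, whereas you verify both, but the argument is the same.
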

	
	\begin{proof}
		Both claims can be derived from \autoref{maximal-degree} and we will demonstrate this in the case of the Demazure module \(V_{(\sigma s_0)^N}(\Lambda)\). Note that $(\sigma s_0)^N = w_{N,0}$ if $N$ is even, and equals $w_{N,1}\sigma$ when $N$ is odd, and $\sigma(\Lambda) = n \Lambda_0 + m \Lambda_1$. By \autoref{maximal-degree} the maximal degree of a weight in  $V_{w_{N,0}}(\Lambda)$ is $\lceil N/2 \rceil m + (m+n) (\lceil N/2 \rceil -1) \lceil N/2 \rceil$, and is equal to $\lfloor N/2 \rfloor (n+2m) + (m+n) (\lfloor N/2 \rfloor -1) \lfloor N/2 \rfloor$ in  $V_{w_{N,1}}(\sigma(\Lambda))$. The formula claimed above for \(V_{(\sigma s_0)^N}(\Lambda)\) unifies those case considerations.
	\end{proof}
	
	As expected, for the Demazure modules $V_{(\sigma s_0)^N}(\Lambda)$ and $V_{(\sigma s_1)^N}(\Lambda)$ one obtains the same limit ratio for large $N$ as in \autoref{asymptotics-N}. But the limit ratios with respect to the coefficients of the fundamental weights $\Lambda_0$ and $\Lambda_1$ are slightly different.
	
	\begin{cor}
		\label{asymptotics-m-n-sigma}
		Let $N \geq 1$ and $j \in \{ 0,1 \}$. Then the limit ratios of the expected and maximal degree in $V_{(\sigma s_j)^N}(m\Lambda_0 + n\Lambda_1)$, as $m$ or $n$ tend to infinity, are given by
		\[
			\frac{\E{\mu_{(\sigma s_0)^N}}{a}}{B^{m,n}_{N,0}} \to
			\begin{cases}
				\frac{N^2 - N + 6 \left\lfloor \frac N2 \right\rfloor}{12 \left\lfloor \frac N2 \right\rfloor \left\lceil \frac N2 \right\rceil}
			 	& (m \to \infty,\ n \text{ fixed}) \\
				\frac{N^2 - 3N - 4 + 6 \left\lceil \frac N2 \right\rceil}{12 \left\lfloor \frac N2 \right\rfloor \left (\left\lceil \frac N2 \right\rceil -1 \right)}
				& (n \to \infty,\ m \text{ fixed}),
			\end{cases}
		\]
		and
		\[
			\frac{\E{\mu_{(\sigma s_1)^N}}{a}}{B^{m,n}_{N,1}} \to
			\begin{cases}
				\frac{N^2 - 3N +2 + 6 \left\lfloor \frac N2 \right\rfloor}{12 \left\lfloor \frac N2 \right\rfloor \left\lceil \frac N2 \right\rceil}
				& (m \to \infty,\ n \text{ fixed}) \\
				\frac{N^2 - N + 6 \left\lceil \frac N2 \right\rceil}{12 \left\lceil \frac N2 \right\rceil \left( \left\lfloor \frac N2 \right\rfloor +1 \right)}
				& (n \to \infty,\ m \text{ fixed}).
			\end{cases}
		\]
	\end{cor}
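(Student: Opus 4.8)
The plan is to reduce the statement to the explicit formulas already in hand: \autoref{theorem-expected-degree-sigma} gives $\E{\mu_{(\sigma s_0)^N}}{a}$ and $\E{\mu_{(\sigma s_1)^N}}{a}$, and \autoref{maximal-degree-sigma} gives $B^{m,n}_{N,0}$ and $B^{m,n}_{N,1}$. For a fixed $N$ both the numerator and the denominator of each of the four ratios are polynomials of degree one in whichever of $m,n$ tends to infinity, so each limit is just the quotient of the two leading coefficients. The entire proof is therefore the extraction and the simplification of those coefficients, carried out exactly as in the proof of \autoref{asymptotics-m-n}.

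First I would handle $V_{(\sigma s_0)^N}(\Lambda)$ with $m \to \infty$. In the first summand of $\E{\mu_{(\sigma s_0)^N}}{a}$ the numerator $2(N-1)m(m+2) + (N-1)(N-2)(m+n)(m+n+2)$ is quadratic in $m$ with leading coefficient $2(N-1) + (N-1)(N-2) = N(N-1)$, while $12(m+n+1)$ is linear with leading coefficient $12$; hence this summand equals $\frac{N(N-1)}{12}m + O(1)$. The term $\lfloor N/2\rfloor\frac m2$ contributes its own $m$, whereas $\lfloor(N-1)/2\rfloor\frac n2$ is $O(1)$, so $\E{\mu_{(\sigma s_0)^N}}{a} = \frac{N^2-N+6\lfloor N/2\rfloor}{12}m + O(1)$. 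Since the leading coefficient of $B^{m,n}_{N,0} = \lfloor N/2\rfloor m + (m+n)\lfloor(N-1)/2\rfloor\lfloor N/2\rfloor$ in $m$ is $\lfloor N/2\rfloor(1+\lfloor(N-1)/2\rfloor) = \lfloor N/2\rfloor\lceil N/2\rceil$, dividing gives the first stated limit. For $n \to \infty$ the analogous computation uses that the numerator of the first summand has leading coefficient $(N-1)(N-2)$ in $n$, that $\lfloor(N-1)/2\rfloor\frac n2$ is now the relevant linear term, and that $B^{m,n}_{N,0}$ has leading coefficient $(\lceil N/2\rceil-1)\lfloor N/2\rfloor$ in $n$; simplifying $(N-1)(N-2)+6\lfloor(N-1)/2\rfloor = N^2-3N-4+6\lceil N/2\rceil$ then produces the second stated limit.

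The module $V_{(\sigma s_1)^N}(\Lambda)$ is treated identically, with the roles of $m$ and $n$ swapped inside the quadratic numerator and with $B^{m,n}_{N,1} = \lceil N/2\rceil n + (m+n)\lfloor N/2\rfloor\lceil N/2\rceil$; here it is handy to note $\lceil(N-1)/2\rceil = \lfloor N/2\rfloor$. The only genuine bookkeeping, and the only thing I would call an obstacle, is reconciling the floor/ceiling expressions — repeatedly invoking $\lfloor(N-1)/2\rfloor+1 = \lceil N/2\rceil$, $\lceil(N-1)/2\rceil = \lfloor N/2\rfloor$, and $6\lfloor(N-1)/2\rfloor = 6\lceil N/2\rceil - 6$ to fold the stray constants (the $+2$, the $-4$, and so on) into the stated numerators. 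This parity casework is tedious but trivial; once the leading terms are in place the limits are immediate.
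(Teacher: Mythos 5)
Your proposal is correct and is exactly the argument the paper intends: the corollary is presented as an immediate consequence of \autoref{theorem-expected-degree-sigma} and \autoref{maximal-degree-sigma}, obtained by extracting the leading coefficients in $m$ (resp.\ $n$) of the expected and maximal degree and simplifying with the identities $\lfloor\frac{N-1}2\rfloor+1=\lceil\frac N2\rceil$ and $\lceil\frac{N-1}2\rceil=\lfloor\frac N2\rfloor$, just as you do. All four leading-coefficient computations in your write-up check out, so nothing further is needed.
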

	
	\section{Further questions}
		
	The first natural question which comes to mind is, what is so special about $\widehat{\mathfrak{sl}}_2$? Based on \cite[Th.~1,~(1.2)]{MR1407880} one is immediately tempted to include type $A^{(2)}_2$ in the considerations presented here.
	But it is not clear to us how to adapt the covariance-variance reduction (\autoref{recursion-expected-value} and \autoref{covariance}) to this case.
				
		In higher rank examples we only know the distribution of the finite weight for specific elements of the Weyl group by \cite{MR2235341}. Hence we would have to restrict our attention to sequences consisting only of those elements to be able to develop a recurrence relation. Even in those cases one runs into similar problems as for type $A^{(2)}_2$ when trying to adapt the covariance-variance reduction.
		
		Another potential continuation of the present discussion would be to compute the variance of the degree distribution. For example, one can readily express the variance of the degree in terms of statistical information of the previous weight distribution. This involves a third moment which would have to be expressed in terms of the distribution of the finite weight. A more ambitious question is, does the degree distribution with a reasonable scaling converge for large $N$?


	\section*{Acknowledgements}
	
	The second author would like to thank Allen Knutson for many discussions and his hospitality during his stay at the Cornell University.

  \bibliography{biblio}

\providecommand{\bysame}{\leavevmode\hbox to3em{\hrulefill}\thinspace}
\providecommand{\MR}{\relax\ifhmode\unskip\space\fi MR }
\providecommand{\MRhref}[2]{%
  \href{http://www.ams.org/mathscinet-getitem?mr=#1}{#2}
}
\providecommand{\href}[2]{#2}
\begin{thebibliography}{KMOU98}

\bibitem[DH82]{MR674406}
J.~J. Duistermaat and G.~J. Heckman, \emph{On the variation in the cohomology
  of the symplectic form of the reduced phase space}, Invent. Math. \textbf{69}
  (1982), no.~2, 259--268.

\bibitem[FL06]{MR2235341}
G.~Fourier and P.~Littelmann, \emph{Tensor product structure of affine
  {D}emazure modules and limit constructions}, Nagoya Math. J. \textbf{182}
  (2006), 171--198.

\bibitem[Kac90]{MR1104219}
Victor~G. Kac, \emph{Infinite-dimensional {L}ie algebras}, third ed., Cambridge
  University Press, Cambridge, 1990.

\bibitem[KMOU98]{MR1620507}
Atsuo Kuniba, Kailash~C. Misra, Masato Okado, and Jun Uchiyama, \emph{Demazure
  modules and perfect crystals}, Comm. Math. Phys. \textbf{192} (1998), no.~3,
  555--567.

\bibitem[San96a]{MR1389364}
Yasmine~B. Sanderson, \emph{Dimensions of {D}emazure modules for rank two
  affine {L}ie algebras}, Compositio Math. \textbf{101} (1996), no.~2,
  115--131.

\bibitem[San96b]{MR1407880}
\bysame, \emph{Real characters for {D}emazure modules of rank two affine {L}ie
  algebras}, J. Algebra \textbf{184} (1996), no.~3, 985--1000.

\bibitem[TZ04]{MR2102573}
Tatsuya Tate and Steve Zelditch, \emph{Lattice path combinatorics and
  asymptotics of multiplicities of weights in tensor powers}, J. Funct. Anal.
  \textbf{217} (2004), no.~2, 402--447.

\end{thebibliography}
  \bibliographystyle{amsalpha}	

\end{document}